\DeclareMathOperator*{\minimize}{min}
\DeclareMathOperator*{\maximize}{max}
\newcommand*\from{\colon}
\theoremstyle{plain}
\newtheorem{thm}{Theorem}
\newtheorem{lem}[thm]{Lemma}
\theoremstyle{definition}
\newtheorem{defn}{Definition}
\newtheorem{ass}{Assumption}
\definecolor{NTNUBlue}{HTML}{00509e}
\definecolor{NTNULightblue}{HTML}{6096d0}
\definecolor{NTNUOrange}{HTML}{ef8114}
\definecolor{NTNUPink}{HTML}{b01b81}
\definecolor{NTNUYellow}{HTML}{f7d019}
\definecolor{NTNUViolet}{HTML}{482776}
\definecolor{NTNUCyan}{HTML}{3cbfbe}
\definecolor{NTNUOcher}{HTML}{cfb887}
\definecolor{LightGrey}{HTML}{bebebe}
\def\changes{black}
\journal{Journal of Process Control}
\begin{document}

\begin{frontmatter}

%% Title, authors and addresses

%% use the tnoteref command within \title for footnotes;
%% use the tnotetext command for theassociated footnote;
%% use the fnref command within \author or \address for footnotes;
%% use the fntext command for theassociated footnote;
%% use the corref command within \author for corresponding author footnotes;
%% use the cortext command for theassociated footnote;
%% use the ead command for the email address,
%% and the form \ead[url] for the home page:
%% \title{Title\tnoteref{label1}}
%% \tnotetext[label1]{}
%% \author{Name\corref{cor1}\fnref{label2}}
%% \ead{email address}
%% \ead[url]{home page}
%% \fntext[label2]{}
%% \cortext[cor1]{Corresponding author}
%% \affiliation{organization={},
%%             addressline={},
%%             city={},
%%             postcode={},
%%             state={},
%%             country={}}
%% \fntext[label3]{}

\title{Stability Properties of the Adaptive Horizon Multi-Stage MPC}

%% use optional labels to link authors explicitly to addresses:
%% \author[label1,label2]{}
%% \affiliation[label1]{organization={},
%%             addressline={},
%%             city={},
%%             postcode={},
%%             state={},
%%             country={}}
%%
%% \affiliation[label2]{organization={},
%%             addressline={},
%%             city={},
%%             postcode={},
%%             state={},
%%             country={}}

\author[inst1]{Zawadi Mdoe}
\ead{zawadi.n.mdoe@ntnu.no}

\affiliation[inst1]{organization={Department of Chemical Engineering, Norwegian University of Science and Technology (NTNU)},%Department and Organization
            addressline={Sem Sælands vei 4},
            postcode={N-7491},
            city={Trondheim},
            %state={State One},
            country={Norway}}

\author[inst1]{Dinesh Krishnamoorthy}
\ead{dinesh.krishnamoorthy@ntnu.no}
\cortext[cor1]{Corresponding author}
\author[inst1]{Johannes J{\"a}schke\corref{cor1}}
\ead{johannes.jaschke@ntnu.no}

%\affiliation[inst2]{organization={Department Two},%Department and Organization
%           addressline={Address Two}, 
%            city={City Two},
%            postcode={22222}, 
%            state={State Two},
%            country={Country Two}}

\begin{abstract}
%% Text of abstract
\textcolor{\changes}{
This paper presents an adaptive horizon multi-stage model-predictive control (MPC) algorithm. 
It establishes appropriate criteria for recursive feasibility and robust stability using the theory of input-to-state practical stability (ISpS).
The proposed algorithm employs parametric nonlinear programming (NLP) sensitivity and terminal ingredients to determine the minimum stabilizing prediction horizon for all the scenarios considered in the subsequent iterations of the multi-stage MPC.
This technique notably decreases the computational cost in nonlinear model-predictive control systems with uncertainty, as they involve solving large and complex optimization problems. 
The efficacy of the controller is illustrated using three numerical examples that illustrate a reduction in computational delay in multi-stage MPC.} 
\end{abstract}

%%Graphical abstract
%\begin{graphicalabstract}
% \includegraphics{grabs}
% \end{graphicalabstract}

%Research highlights
\begin{comment}
\begin{highlights}
\color{\changes}
 \item We propose an adaptive horizon framework for multi-stage MPC, show how to calculate appropriate terminal ingredients and provide a prediction horizon update for multiple scenarios.
\item Recursive feasibility of the adaptive horizon multi-stage MPC is guaranteed for a fully branched scenario tree.
\item Provided that robust constraint satisfaction is guaranteed, the controlled system is closed-loop ISpS stable.
\item Three numerical case studies show improved computational performance in the adaptive horizon multi-stage MPC compared to the fixed horizon multi-stage MPC and near-identical performances.
\color{black}
\end{highlights}   
\end{comment}

\begin{keyword}
%% keywords here, in the form: keyword \sep keyword
%robust MPC \sep 
fast MPC \sep multi-stage MPC \sep computational delay \sep parametric sensitivity \sep stability. %properties.
%% PACS codes here, in the form: \PACS code \sep code
% \PACS 0000 \sep 1111
%% MSC codes here, in the form: \MSC code \sep code
%% or \MSC[2008] code \sep code (2000 is the default)
% \MSC 0000 \sep 1111
\end{keyword}

\end{frontmatter}

%% \linenumbers

%% main text
\section{Introduction}
%% For citations use: 
%%       \citet{<label>} ==> Jones et al. (2015)
%%       \citep{<label>} ==> (Jones et al., 2015)
%%
This paper investigates a way to reduce the computational cost of model predictive control (MPC).
MPC has become increasingly popular in process systems engineering due to its inherent capacity to handle complex constrained multivariate systems \citep{rawlings2017model}.
There has been a broader interest in nonlinear MPC (NMPC) because of the significant nonlinear dynamics in process systems.
However, this requires solving a nonlinear program (NLP) which due to its numerical complexity may result in a large computation cost \cite{murty1985some}.
\textcolor{\changes}{The computation cost is further increased when uncertainty is explicitly considered, in robust MPC approaches, such as in multi-stage MPC formulations.}
\begin{comment}
\textcolor{\changes}{One robust MPC scheme is the multi-stage MPC that ensures constraint satisfaction despite uncertainty. %\citep{scokaert1998min}.
%Lucia et al.
Initial ideas for multi-stage MPC applied to constrained linear systems were first presented in \cite{de2005stochastic} and later in \cite{bernardini2009scenario}, where it has also been referred to as scenario-based MPC.
%It then appears in \cite{bernardini2009scenario} where it is referred to as scenario-based MPC for constrained linear systems.
\citet{lucia2013multi} then extends the scenario-based MPC for constrained nonlinear systems.}
\end{comment}

\textcolor{\changes}{Multi-stage MPC samples from an uncertain parameter set to obtain a finite set of discrete realizations and then propagate the uncertainty into the future as a growing scenario tree \cite{scokaert1998min,lucia2013robust}.
The size of the optimization problem increases exponentially as the prediction horizon expands, eventually becoming computationally intractable.
As a result, a significant computational delay occurs, affecting control performance.
This poses a major challenge in implementing multi-stage MPC.} 
%Lucia et al. 
\textcolor{\changes}{\citet{de2006feedback} and \citet{lucia2013multi} suggest using a robust horizon to decrease the number of variables, but an additional reduction may be required when there is a substantial number of parameter realizations.}
\textcolor{\changes}{Reducing the problem size minimizes computational delay and therefore improves the multi-stage MPC controller performance.}

\textcolor{\changes}{Several methods have been proposed in MPC literature to enhance computational efficiency.
One approach is the adaptive horizon MPC which aims at providing an online horizon update for subsequent MPC iterations.
This framework shortens the prediction horizon as the controlled system approaches a stabilizing region.
The problem size is reduced, leading to shorter solution times while preserving the MPC stability properties.
%Krener
\citet{krener2018adaptive, Krener2019} proposed using the properties of the Lyapunov function to detect stabilization before shortening the horizon.
This method requires the determination of a good Lyapunov function, which can be difficult for nonlinear systems.
%Griffith et al. 
\citet{griffith2018robustly} employs linearization around the operating point to establish a stabilizing region where the horizon can be truncated.
Despite their successful implementation in standard (nominal) MPC, the adaptive horizon techniques have not yet been employed in multi-stage MPC.
Multi-stage MPC has significantly larger solution times than standard MPC because of the explicit consideration of multiple scenarios in its optimization problem.
Hence, the adaptive horizon strategy is even more beneficial for multi-stage MPC, particularly for nonlinear systems.}

\textcolor{\changes}{In this paper, we extend the adaptive horizon scheme proposed in \cite{griffith2018robustly} for multi-stage MPC.
This approach requires performing one-step-ahead predictions, terminal region calculations, and selection of the sufficient (stabilizing) prediction horizon length. 
Although this was studied for the standard MPC in \cite{griffith2018robustly}, the extension of these ideas to multi-stage MPC where we have multiple scenarios is not trivial, because (i) the determination of terminal ingredients depends on the nature of the scenario tree, and (ii) the horizon update using parametric sensitivity needs to ensure that the one-step-ahead approximate solution for each scenario reaches their stabilizing terminal regions.}
\textcolor{\changes}{A proof of concept was very briefly presented in our earlier work \cite{mdoe2021adaptive}. This paper extends our previous work by describing the approach in full detail, providing a robust stability analysis of the resulting controller based on Input-to-State practical Stability (ISpS), and demonstrating the performance on a more complex case study.} 

The remainder of the paper is organized as follows. 
\cref{sec:standardmpc,sec:multistagempc} define the standard MPC and the fixed horizon multi-stage MPC, respectively.
\cref{sec:ahmsnmpc} presents the adaptive horizon multi-stage MPC showing how terminal ingredients are computed, and the horizon update algorithm for multi-stage MPC.
The main results are in \cref{sec:stabilityproperties} where the recursive feasibility and stability properties of adaptive horizon multi-stage MPC are presented for both cases when the scenario tree is fully branched or has a robust horizon.
\cref{sec:casestudies} presents three numerical examples to show the performance of the adaptive horizon multi-stage MPC controller.
The first example demonstrates its control performance with recursive feasibility and robust stability guarantees, and the remaining examples compare the controller performance against the fixed horizon multi-stage MPC for different robust horizons.
Finally, \cref{sec:conclusion} outlines the main findings and future work avenues.

The uncertain process is represented in discrete-time form by the following difference equation:
\begin{equation}
	x_{k+1} = f(x_k, u_k, d_k)
	\label{eq:uncertainNLsystem}
\end{equation}
where $ x_k \in \mathbb{X} \subset \mathbb{R}^{n_x} $ are the states, $ u_k \in \mathbb{U} \subset \mathbb{R}^{n_u} $ are the control inputs, and $ d_k \in \mathbb{D}_c \subset \mathbb{R}^{n_d} $ are the uncertain parameters (disturbances) at time step $ t_k $ where $ k \geq 0 $ is the time index. 
\textcolor{\changes}{The set of real numbers is denoted by $\mathbb R$, and the set of integers by $\mathbb Z$, with the subscript $_+$ referring to their non-negative subspaces.
The Euclidean vector norm is denoted as $ | \cdot | $ and the corresponding induced matrix norm as $ \| \cdot \| $.
The function $ f \from \mathbb{R}^{n_x+n_u+n_d} \mapsto \mathbb{R}^{n_x} $ is assumed to be  twice differentiable in all its arguments, has Lipschitz continuous second derivatives, and is assumed to be transformed to obtain the nominal steady state model as $ f(0,\,0,\,d^0) = 0 $ where $ d^0 $ is the nominal disturbance.}

\section{Standard MPC} \label{sec:standardmpc}
We define the standard MPC as follows.
For system \eqref{eq:uncertainNLsystem} the controller receives the current state $ x_k $ and assumes \textcolor{\changes}{a nominal parameter $d^0$}.
The optimal control problem (OCP) for standard MPC is a parametric NLP in $ x_k $ \textcolor{\changes}{and $d^0$} written as follows:
\begin{subequations}
	\label{eq:standardNMPCformulation}
	\begin{align}
			\textcolor{\changes}{V_N^\mathrm{nom}(x_k, d^0) =} \minimize_{z_i, \nu_i}  \quad & \psi(z_N, \textcolor{\changes}{d^0}) + \sum_{i=0}^{N-1} \ell(z_i, \nu_i, \textcolor{\changes}{d^0}) \label{eq:objstandardNMPCformulation}\\
			\text{s.t.} \quad  & z_{i+1} = f(z_i, \nu_i, \textcolor{\changes}{d^0}), \quad i = 0, \dots, N-1  \label{eq:constraintsstandardNMPCformulation}\\
			& z_0 = x_k, \label{eq:ICstandardNMPCformulation} \\
			& z_i \in \mathbb{X}, \: \nu_i \in \mathbb{U}, \label{eq:boundsstandardNMPCformulation} \\
			& z_N \in \mathbb{X}_f \label{eq:TCstandardNMPCformulation} 
	\end{align}
\end{subequations}
where $ N $ is the prediction horizon, $ V_N^\mathrm{nom} $ is the optimal cost, $ z_i $ and $ \nu_i $ are the state and input variable vectors, respectively at time $ t_{k+i} $.
The objective \eqref{eq:objstandardNMPCformulation} consists of a stage cost $ \ell \from \mathbb{R}^{n_x+n_u+n_d} \mapsto \mathbb{R} $, and a terminal cost $ \psi \from \mathbb{R}^{n_x+n_d} \mapsto \mathbb{R} $.\footnote{\textcolor{\changes}{We have chosen to include the parameter $d$ in the stage cost.
This could, for example, model changes in the setpoint, or other cost function parameters that are set by the user.}}
The initial condition \eqref{eq:ICstandardNMPCformulation} is the current state $ x_k $.
The equations \eqref{eq:constraintsstandardNMPCformulation} are constraints predicting system dynamics, and \eqref{eq:boundsstandardNMPCformulation} are the bounds on state and input variables.
The terminal constraints are in \eqref{eq:TCstandardNMPCformulation}, where $ \mathbb{X}_f \subset \mathbb{X} $ is the terminal region set.

The standard MPC receives $x_k$ and solves \eqref{eq:standardNMPCformulation} for an optimal input sequence $ \bm{v}^\star_{[0,N-1]} $ and their corresponding state predictions $ \mathbf{z}^\star_{(0,N]} $ in the time interval $ (t_k, \, t_{k+N}] $ at every iteration $ k $.
The first stage input from the sequence is applied to the plant as $ u_k $.
Then the prediction horizon is shifted one step, and the problem is re-optimized with the new state $ x_{k+1} $.

Since the current optimal input is obtained with respect to $ x_k $, the standard MPC policy yields a feedback control law $ u_k = \kappa(x_k) $, where $ \kappa\from \mathbb{R}^{n_x} \mapsto \mathbb{R}^{n_u} $.
This gives standard MPC with limited robustness to model uncertainty \citep{de1996robustness, pannocchia2011conditions, yu2014inherent, allan2017inherent}.
However, standard MPC exhibits a substantial decline in performance for significant plant-model mismatch due to uncertainty.

\section{Multi-stage MPC} \label{sec:multistagempc}
\textcolor{\changes}{Unlike standard MPC, multi-stage MPC explicitly takes into account the model uncertainty. 
Multi-stage MPC models uncertainty as a tree of discrete scenarios (called a scenario tree) used to depict the evolution of uncertainty along the prediction horizon \citep{scokaert1998min,lucia2013multi}.
To reduce the problem size in the scenario tree that grows exponentially with the prediction horizon, it is often proposed to limit the branching of the tree only up to a certain stage, called the robust horizon \cite{de2006feedback,lucia2013multi}.
\cref{fig:robusthorizonscenariotree} shows a scenario tree with a robust horizon $ N_R = 2 $ and nine scenarios, where branching stops at $t_{k+N_R} $ and the parameter realizations are kept constant until $t_{k+N}$.} 
\begin{figure}%%[htbp]
	\centering
	\includegraphics[scale=1.0]{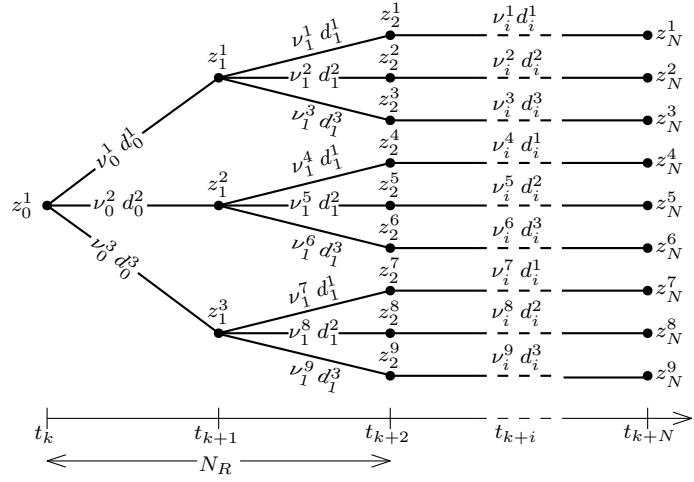}
	\caption{A scenario tree with nine scenarios, prediction horizon $ N $, and robust horizon $N_R = 2$.
	The dashed lines represent all the nodes in between $ t_{k+2} $ and $ t_{k+N} $.}
	\label{fig:robusthorizonscenariotree}
\end{figure}
\textcolor{\changes}{The number of scenarios is given by $\text{card}(\mathbb{C}) = ({N_D})^{N_R}$, where $\mathbb{C}$ is the scenario index set and $N_D$ is the number of discrete parameter realizations.
The problem size is significantly reduced when $N_R << N$, and the fully branched scenario tree is recovered by setting $N_R = N$.}

\textcolor{\changes}{Given $ x_k $ at $ t_k $ multi-stage MPC requires solving the following problem:
\begin{subequations}
	\label{eq:scenarioNLPformulation}
	\begin{align}
			V^\mathrm{ms}_{N}(x_k, \mathbf{d}^c) = \minimize_{z_i^c, \nu_i^c}  \; & \sum_{c\in\mathbb{C}} \omega_c \Big( \psi(z_N^c, d_{N-1}^c) + \sum_{i=0}^{N-1} \ell(z_i^c, \nu_i^c, d_i^c) \Big) \label{eq:scenarioNLPobjective}\\
			\text{s.t.} \;  & z_{i+1}^c = f(z_i^c, \nu_i^c, d_i^c), \; i = 0, \dots, N-1 \\
			& z_0^c = x_k, \\
			& \nu_i^c = \nu_i^{c'}, \; \{(c,c') \; | \; z_i^c = z_i^{c'}\} \label{eq:scenarioNLPnacs} \\
			& d_{i-1}^c = d_i^c, \; i \geq N_R \label{eq:scenarioNLPconstantd}\\
			& z_i^c \in \mathbb{X}, \: \nu_i^c \in \mathbb{U}, \: d_i^c \in \mathbb{D}, \label{eq:scenarioNLPbounds} \\
			& z_N^c \in \mathbb{X}_f^c, \\
			& \forall \, c, c' \in \mathbb{C} \notag
	\end{align}
\end{subequations}}
where $ z_i^c, \nu_i^c $, and $ d_i^c $ represent the state, input, and disturbance vectors at stage $ i $ and scenario $ c $.
The set $ \mathbb C $ is the scenario index set, and the nominal scenario has a zero index.
The objective function \eqref{eq:scenarioNLPobjective} is the weighted sum of the stage and terminal costs across all the scenarios with $ \omega_c $ being the probability for each scenario.
Non-anticipativity constraints (NACs) ensuring that inputs from a common parent node in the scenario tree (\cref{fig:robusthorizonscenariotree}) must be equal are given by \eqref{eq:scenarioNLPnacs}.
This is because it is only possible to make a single control action $ u_k = \nu^\star_0 $ on the plant.
\eqref{eq:scenarioNLPconstantd} is included to show that parameter realizations are constant after the robust horizon.
\textcolor{\changes}{The state and control inputs inequality constraints across all scenarios are given in \eqref{eq:scenarioNLPbounds} and $\mathbb D$ is the set of the discrete parameter realizations.}  

Problem \eqref{eq:scenarioNLPformulation} is a parametric optimization problem in $x_k$ and $\mathbf{d}^c$, where $\mathbf{d}^c$ denotes $\mathbf{d}_{i = 0, \dots, N-1}^{c = 1, \dots, \text{card}(\mathbb C)}$ which is the concatenated vector of all parameter realizations in the scenario tree (across all scenarios and across all time steps in the prediction horizon).

\section{Adaptive horizon multi-stage MPC} \label{sec:ahmsnmpc}
\textcolor{\changes}{Adaptive horizon algorithms provide an update for the prediction horizon length of the MPC problem.
The prediction horizon is not an optimization variable in these approaches, resulting in faster horizon updates compared to solving a mixed-integer NLP (MINLP) in variable horizon MPC approach \cite{shekhar2012variable}.
The adaptive horizon MPC has been implemented for reference tracking in \cite{griffith2018robustly}, and then extended for economic MPC in \cite{krishnamoorthy2020adaptive}. 
In this paper, we aim to extend the algorithm in \cite{griffith2018robustly} to multi-stage MPC for reference tracking objectives.}

\subsection{Variable transformation}
Setpoint tracking under uncertainty implies tracking different implicit references for each scenario.
Therefore, the solution to \eqref{eq:scenarioNLPformulation} has states and control inputs tracking different implicit references for each parameter realization.
It is possible to enforce a common zero by reformulating \eqref{eq:scenarioNLPformulation} with transformed variables.
But first, let us define the following system properties:
\begin{defn}
	(Mapping scenarios to parameter realizations)
    There exists a function $ r \from \mathbb{C} \mapsto \mathbb{Q} $ mapping any scenario $ c $ to its parameter realization index at its leaf node, where $ \mathbb Q $ is the set of parameter realization indices.
\end{defn}
\begin{defn}
	(Stable and optimal equilibrium pairs for each parameter realization)
	For any given parameter realization $ d^r \in \mathbb D $, a state-input pair $ (x^r,\allowbreak u^r) \in \mathbb X \times \mathbb U $ is a \emph{stable equilibrium pair} for system \eqref{eq:uncertainNLsystem} if $ x^r = f(x^r,\allowbreak u^r,\allowbreak d^r) $ holds. 
	Further, if it yields the lowest stage cost among all equilibrium points, then it is the \emph{optimal equilibrium pair} $ (x_f^r,\allowbreak u_f^r) $.
\end{defn}
Then let us make the following assumption.
\begin{ass} \label{ass:steadystatesolution}
	There exists an optimal equilibrium pair $ (x_f^r,\allowbreak u_f^r) $ for each parameter realization $ d^r \in \mathbb D $.
	The optimal equilibrium pairs for each parameter realization are the implicit references for the corresponding scenarios.
\end{ass}
\textcolor{\changes}{\cref{ass:steadystatesolution} implies that there exists a solution to the steady state optimization problem of \eqref{eq:uncertainNLsystem} for each parameter realization $ d^r \in \mathbb D $.}

To enforce a common zero to all the scenarios, reformulate \eqref{eq:scenarioNLPformulation} using the following:
\begin{equation}
	z_i^c = z_i^c - x_f^r, \quad \nu_i^c = \nu_i^c - \nu_f^r
	\label{eq:statetransformation}
\end{equation}
for all $c \in \mathbb C$, and for all $ r \in \mathbb{Q} $.
The transformed system model becomes:
\begin{equation}
	\overline z_{i+1}^c = \overline{f}(\overline z_i^c, \overline \nu_i^c, d_i^c) = f(\overline z_i^c + x_f^r, \overline \nu_i^c + u_f^r, d_i^c) - x_f^r
	\label{eq:odetransformation}
\end{equation}
such that $\overline z_i^c \in \overline{\mathbb X}$, $\overline \nu_i^c \in \overline{\mathbb U}$, where $\overline{\mathbb X}$ and $\overline{\mathbb U}$ are the corresponding new feasible sets of the transformed system.
Equations \eqref{eq:statetransformation} and \eqref{eq:odetransformation} imply when $(\overline z_i^c, \overline \nu_i^c) = (0, 0) $ we have $ x_f^r = f(x_f^r, u_f^r, d_i^c) $ for all $ c \in \mathbb C$, and for all $ r \in \mathbb{Q}$.
We define transformed stage costs as: 
\begin{equation}
	\overline \ell(\overline z_i^c, \overline \nu_i^c, d_i^c) = \ell(\overline z_i^c + x_f^r, \overline \nu_i^c + u_f^r, d_i^c) - \ell(x_f^r, u_f^r, d_i^c)
	\label{eq:stagecosttransformation}
\end{equation}
and transformed terminal costs as:
\begin{equation}
	\overline \psi(\overline z_N^c, d_{N-1}^c) = \psi(\overline z_N^c + x_f^r, d_{N-1}^c) - \psi(x_f^r, d_{N-1}^c)
	\label{eq:terminalcosttransformation}
\end{equation}
such that $\overline \psi(0, d_{N-1}^c) = \overline \ell(0, 0, d_i^c) = 0 $ for all $ c \in \mathbb C $.

The problem \eqref{eq:scenarioNLPformulation} is transformed using \crefrange{eq:statetransformation}{eq:terminalcosttransformation} to obtain a common terminal region containing zero. 
This will be useful to establish closed-loop stability later in \cref{sec:stabilityproperties}.

To keep the notation simple, we will continue using the original notation. 
However, from here on, all variables and functions in equation \eqref{eq:scenarioNLPformulation} are assumed to be transformed.

\textcolor{\changes}{Now that the desired system properties have been defined, the next subsection discusses the methodology to compute the terminal ingredients for each parameter realization in the multi-stage MPC.}
\subsection{Terminal ingredients for the adaptive horizon multi-stage MPC} \label{sec:terminalconditions}
The adaptive horizon multi-stage MPC requires (i) a terminal cost $\psi$, and (ii) a positive invariant terminal region $\mathbb X_f$.
The terminal cost is the weighted average of the terminal costs across all parameter realizations.
For each parameter realization, a terminal cost function is computed beforehand, based on a linearized system about its optimal equilibrium pair \citep{chen1998quasi}.
\color{\changes}
The terminal region is assumed to be the region where a terminal control law on the nonlinear system has a negligible error.
But the determination of the terminal control law depends on whether the scenario tree is fully branched or has a robust horizon. 

\subsubsection{Terminal ingredients for a scenario tree with a robust horizon} \label{sec:terminalingredientsrobusthorizon}
First, consider when the scenario tree has a robust horizon i.e. $N_R < N$. 
The scenario tree has no NACs at the leaf nodes leading to decoupled terminal control input variables ($\nu^c_{N-1}$) across all the scenarios. 
Different stabilizing control laws may exist inside the terminal region for each parameter realization.
Then the terminal regions for each parameter realization are approximated independently. 
We use an infinite horizon LQR for the control law in the terminal region and the terminal cost.
\color{black}
\begin{ass} \label{ass:msnmpctermincond}
	Given a parameter realization $d^r \in \mathbb D$ for all $ r \in \mathbb Q $, there exists a stabilizing LQR with a local control law $ h_f^{r}(x) := u_f^{r}-K_{r} x $ such that $ f(x, h_f^{r} (x), d^r) \in \mathbb X_f^r $ for all $ x \in \mathbb X_f^r$. 
\end{ass}
\textcolor{\changes}{\cref{ass:msnmpctermincond} considers that different LQR controllers may exist in the terminal region across the different scenarios.}

For a given parameter realization $d^r$, linearize \eqref{eq:uncertainNLsystem} about $ (x_f^{r}, u_f^{r}) $ and write the resulting system as a sum of the linear and nonlinear terms as follows:
\begin{equation} \label{eq:nlsystemsplittransformed}
	x_{k+1}  = {A_{K}}_r x_k \, + \phi(x_k, h_f^r(x_k), d^r)
\end{equation}
where $ A_{K_r} = A_r - B_r K_r $, $ A_r = \nabla_x^\top f(0,\allowbreak 0,\allowbreak d^r) $ and $ B_r = \nabla_u^\top f(0,\allowbreak 0,\allowbreak d^r) $ and $ (A_r,\allowbreak B_r) $ is stabilizable, $ \phi \from \mathbb X \times \mathbb U \times \mathbb D \mapsto \mathbb X $ is the nonlinear part of the dynamics.
The infinite horizon LQR for the linearized system with the parameter realization $d^r$ is:
\begin{equation} \label{eq:infinitehorizonLQR}
	\begin{split}
	\psi(x_k, d^r) & = x_k^\top P_r x_k  = \minimize_{\nu_i} \sum_{l=0}^{\infty} (z_i^\top Q z_i \, + \nu_i^\top R \nu_i)\\
	\text{s.t.} \, & z_0 = x_k, \;  z_{i+1} = A_r z_i + B_r\nu_i, \quad \forall \, i = 0,\; \dots, \; \infty
	\end{split}
\end{equation}
where $ Q \succ 0,\, R \succ 0 $ are tuning matrices.
$ P_r \succ 0 $ is the Ricatti matrix for $d^r$.

To determine the terminal region for the LQR on the nonlinear system, there must exist an upper bound on the nonlinear effects for any parameter realization given by \cref{thm:linerrbound}.
\begin{lem} \label{thm:linerrbound}
For any parameter realization $ d^r \in \mathbb{D}$, there exists $ M_r, \, q_r \in \mathbb{R}_+$ such that $ |\phi(x, d^r)| \leq M_r|x|^{q_r}, \, \forall \, x \in \mathbb X $ where $ \phi(x, d^r) := \phi(x, h_f^r(x), d^r) $.
\end{lem}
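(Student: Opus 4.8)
The plan is to recognise $\phi(\cdot,d^r)$ as the first‑order Taylor remainder of the closed‑loop vector field and to bound it using the $C^2$‑with‑Lipschitz‑second‑derivatives regularity of $f$ assumed in the introduction. Fix $r\in\mathbb Q$ and define the closed‑loop map $g_r(x) := f\big(x,\,h_f^r(x),\,d^r\big)$, where $h_f^r$ is the affine control law of \cref{ass:msnmpctermincond}. Because the problem is written in the transformed coordinates of \eqref{eq:statetransformation}–\eqref{eq:odetransformation}, the optimal equilibrium pair $(x_f^r,u_f^r)$ is mapped to the origin, so $g_r(0)=0$ (equivalently, the origin is an equilibrium of the transformed dynamics \eqref{eq:odetransformation}); differentiating the composition through the affine map $h_f^r$ by the chain rule gives $\nabla^\top g_r(0) = A_r - B_r K_r = A_{K_r}$. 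Comparing with \eqref{eq:nlsystemsplittransformed}, this shows that $\phi(x,d^r) = g_r(x) - A_{K_r}x = g_r(x) - g_r(0) - \nabla^\top g_r(0)\,x$ is exactly the first‑order Taylor remainder of $g_r$ about $x=0$.

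Next I would collect the regularity of $g_r$: since $f$ is twice differentiable with Lipschitz‑continuous second derivatives and $h_f^r$ is affine, $g_r\in C^2$ and its Jacobian $\nabla^\top g_r$ is Lipschitz continuous; restricting attention to $\mathbb X$, let $L_r$ be a Lipschitz constant of $\nabla^\top g_r$ on $\mathbb X$, which is finite because $\mathbb X$ is bounded. Then apply Taylor's theorem in integral form,
\[
\phi(x,d^r) = \int_0^1 \big(\nabla^\top g_r(sx) - \nabla^\top g_r(0)\big)\,x\,\mathrm ds ,
\]
take norms under the integral sign using $\big\|\nabla^\top g_r(sx) - \nabla^\top g_r(0)\big\| \le L_r\,s\,|x|$, and integrate $\int_0^1 s\,\mathrm ds = \tfrac12$ to obtain $|\phi(x,d^r)| \le \tfrac{L_r}{2}\,|x|^2$ for all $x\in\mathbb X$. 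The claim then follows with $M_r := L_r/2 \in \mathbb R_+$ and $q_r := 2 \in \mathbb R_+$.

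The only step that requires a little care is the uniform Lipschitz bound on $\nabla^\top g_r$ over all of $\mathbb X$, which is where boundedness (compactness) of the constraint set $\mathbb X$ is used; if $\mathbb X$ were unbounded, the estimate should instead be read as holding on every compact subset of $\mathbb X$, in particular on a neighbourhood of the origin, which is all that the subsequent terminal‑region construction in \cref{sec:terminalconditions} needs, and $q_r=2$ remains a valid choice. Beyond this, the argument is a routine application of Taylor's theorem, so I do not anticipate a genuine obstacle; the content of the lemma is simply that the assumed second‑order smoothness of $f$ furnishes a quadratic envelope for the linearisation error of each scenario's closed‑loop dynamics.
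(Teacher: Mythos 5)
Your argument is correct and is essentially the same one the paper invokes by deferring to Theorem 14 of Griffith et al.: the linearization error $\phi(\cdot,d^r)$ is the first-order Taylor remainder of the closed-loop map, and the assumed $C^2$ regularity of $f$ (with the affine law $h_f^r$) yields $|\phi(x,d^r)|\le M_r|x|^{q_r}$ on the compact set $\mathbb X$, your proof simply fixing $q_r=2$, which suffices since the lemma only asserts existence (the paper's fitted $q_r$ values merely tighten the bound numerically). The only cosmetic point is that the Lipschitz constant of $\nabla^\top g_r$ should be taken on a convex set containing $\mathbb X\cup\{0\}$ (e.g.\ a ball containing $\mathbb X$), since the integral form of the remainder evaluates the Jacobian along the segment from $0$ to $x$; this is harmless because the regularity of $f$ is global and $\mathbb X$ is bounded.
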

\begin{proof}
    Consider a single $d^r$ and then follow the proof of Theorem 14 in \cite{griffith2018robustly}.
\end{proof}
Analytical determination of the linearization error and its bound is tedious or impossible.
The bounds are determined offline by one-step simulations instead.
One-step simulations from random initial states are used to obtain the linearization error numerically by evaluating for each $d^r$, the difference $ \phi(x, d^r) = f(x, h_f^r(x), d^r) - A_{K_r}x, \,\forall \, x \in \mathbb X $.
\begin{ass} \label{ass:msnmpcterminalconstraints}
	For a given $ d^r $, there exists a terminal region of attraction $ \mathbb{X}_f^r $  around $ (x^r_f, u_f^r) $ such that $ \mathbb{X}_f^r := \{z \; | \; |z - x^r_f | \leq c_f^r\} $
	where $  c_f^r $ is the terminal radius corresponding to $d^r$.
\end{ass}
\cref{ass:msnmpcterminalconstraints} means that the terminal region set for each parameter realization is approximated by the interior of an \textit{n}-sphere with radius $c_f^r$. 
After fitting $M_r$ and $q_r$ for all parameter realizations according to \cref{thm:linerrbound}, and \cref{ass:msnmpcterminalconstraints} holds, the terminal region radii are computed using \cref{thm:msnmpcterminalradius}. 
\begin{lem} \label{thm:msnmpcterminalradius}
	Suppose \cref{ass:msnmpcterminalconstraints} holds, the terminal radii $ c_f^r $ for all $ d^r \in \mathbb{D} $ depend on their corresponding linearization error bounds which are given by:
	\begin{equation}
		\label{eq:cf}
		c_f^r := \Bigg( \frac{-\bar{\sigma}_r \Lambda_r \, + \sqrt{(\bar{\sigma}_r \Lambda_r)^2 \, + (\underline{\lambda}_{W_r} \, - \epsilon_{LQ})\Lambda_r)}}{\Lambda_r M_r}\Bigg)^\frac{1}{q_r-1}
	\end{equation}
	where $\bar{\sigma}_r$ is the maximum singular value of $ A_{K_r} $, $ \bar{\lambda}_{W_r} $ and $ \underline{\lambda}_{W_r} $ are the maximum and minimum eigenvalues of $W_r := Q + {K_r}^\top RK_r$, $ \Lambda_r:= \frac{\bar{\lambda}_{W_r}}{(1-\bar{\sigma}_r)^2} $, and $ \epsilon_{LQ} > 0$ is a small constant: an allowable tolerance for the terminal cost $ \psi(x, d^r) $.
\end{lem}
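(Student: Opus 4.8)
The plan is to obtain \eqref{eq:cf} by imposing a quasi-infinite-horizon Lyapunov decrease requirement on the terminal cost $\psi(\cdot,d^r)$ along the nonlinear closed loop $x^+ = f(x,h_f^r(x),d^r)$ and then solving the resulting scalar inequality for the largest admissible $|x|$. I work in the transformed coordinates of \crefrange{eq:statetransformation}{eq:terminalcosttransformation}, so the relevant equilibrium is at the origin, $\psi(x,d^r)=x^\top P_r x$, $h_f^r(x)=-K_rx$, and by \cref{ass:msnmpcterminalconstraints} the terminal region is the ball $\mathbb X_f^r=\{z:|z|\le c_f^r\}$, so determining $c_f^r$ is exactly determining this ball. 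The requirement to be enforced is that $\psi(\cdot,d^r)$ decreases along the nonlinear closed loop with margin at least $\epsilon_{LQ}|x|^2$, i.e.\ $\psi(x^+,d^r)-\psi(x,d^r)\le -\epsilon_{LQ}|x|^2$ for all $x\in\mathbb X_f^r$; this is the precise sense in which $\epsilon_{LQ}$ is an allowable tolerance for $\psi(x,d^r)$.

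First I record the algebraic Riccati/Lyapunov identity for the infinite-horizon LQR \eqref{eq:infinitehorizonLQR}, namely $A_{K_r}^\top P_rA_{K_r}-P_r=-W_r$ with $W_r=Q+K_r^\top RK_r$, which is just the stationarity condition for the feedback $-K_r$. Next I bound the terminal-cost matrix: writing the LQR cost as $x^\top P_rx=\sum_{j\ge 0}(A_{K_r}^jx)^\top W_r(A_{K_r}^jx)$ and using $|A_{K_r}^jx|\le\bar\sigma_r^j|x|$ with $\bar\sigma_r<1$ gives $\|P_r\|\le\bar\lambda_{W_r}/(1-\bar\sigma_r^2)\le\bar\lambda_{W_r}/(1-\bar\sigma_r)^2=\Lambda_r$. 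Then, with $x^+=A_{K_r}x+\phi(x,d^r)$ and the bound $|\phi(x,d^r)|\le M_r|x|^{q_r}$ from \cref{thm:linerrbound}, I expand
\begin{equation*}
\psi(x^+,d^r)-\psi(x,d^r)= -x^\top W_rx + 2(A_{K_r}x)^\top P_r\phi(x,d^r) + \phi(x,d^r)^\top P_r\phi(x,d^r),
\end{equation*}
and estimate $x^\top W_rx\ge\underline\lambda_{W_r}|x|^2$, $2(A_{K_r}x)^\top P_r\phi(x,d^r)\le 2\bar\sigma_r\Lambda_rM_r|x|^{q_r+1}$, and $\phi(x,d^r)^\top P_r\phi(x,d^r)\le\Lambda_rM_r^2|x|^{2q_r}$.

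Combining these, the decrease requirement holds whenever $2\bar\sigma_r\Lambda_rM_r|x|^{q_r+1}+\Lambda_rM_r^2|x|^{2q_r}\le(\underline\lambda_{W_r}-\epsilon_{LQ})|x|^2$. Dividing by $|x|^2$ and substituting $t:=M_r|x|^{q_r-1}$ — strictly increasing in $|x|$ because $q_r>1$ — reduces this to the quadratic inequality $\Lambda_rt^2+2\bar\sigma_r\Lambda_rt-(\underline\lambda_{W_r}-\epsilon_{LQ})\le 0$. Its relevant positive root is $t^\star=\bigl(-\bar\sigma_r\Lambda_r+\sqrt{(\bar\sigma_r\Lambda_r)^2+\Lambda_r(\underline\lambda_{W_r}-\epsilon_{LQ})}\bigr)/\Lambda_r$, which is real and positive provided $\underline\lambda_{W_r}>\epsilon_{LQ}$; the inequality therefore holds exactly for $|x|\le (t^\star/M_r)^{1/(q_r-1)}$, and this value is precisely \eqref{eq:cf}. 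Setting $c_f^r$ to that value makes the decrease condition hold on all of $\mathbb X_f^r$, and together with the invariance granted by \cref{ass:msnmpctermincond} gives a valid terminal ingredient for realization $d^r$; repeating the argument for every $d^r\in\mathbb D$ completes the proof.

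The main obstacle is the bookkeeping behind $\|P_r\|\le\Lambda_r$: it tacitly needs $\bar\sigma_r<1$, which is a property of the particular gain $K_r$ rather than an automatic consequence of stabilizability of $(A_r,B_r)$, and one must check that the looser constant $\bar\lambda_{W_r}/(1-\bar\sigma_r)^2$ appearing in $\Lambda_r$ still dominates the sharper $\bar\lambda_{W_r}/(1-\bar\sigma_r^2)$ — it does, since $(1-\bar\sigma_r)^2\le 1-\bar\sigma_r^2$ for $\bar\sigma_r\in[0,1)$. The only other care points are the standing conditions $q_r>1$ and $\underline\lambda_{W_r}>\epsilon_{LQ}$ that make \eqref{eq:cf} a well-defined positive radius and make the substitution $t=M_r|x|^{q_r-1}$ and its inverse monotone; everything else is routine norm estimation.
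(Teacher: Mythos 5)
Your proposal is correct and follows essentially the same route as the paper: the paper's proof simply applies, for each fixed $d^r$, the argument of \cite{griffith2018robustly}, which is exactly the Lyapunov-decrease computation you reconstruct — the closed-loop identity $A_{K_r}^\top P_r A_{K_r}-P_r=-W_r$, the bounds $\|P_r\|\le\Lambda_r$ and $|\phi(x,d^r)|\le M_r|x|^{q_r}$, and the positive root of the quadratic in $t=M_r|x|^{q_r-1}$, yielding \eqref{eq:cf}. Your flagged care points ($\bar\sigma_r<1$, $q_r>1$, $\underline{\lambda}_{W_r}>\epsilon_{LQ}$) are the same standing conditions implicit in the cited proof, so there is no gap.
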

\begin{proof}
	As in \cref{thm:linerrbound}, consider a single $d^r$ and use the proof from \cite{griffith2018robustly}.
\end{proof}
% \begin{rmk} \label{rmk:terminalcontrollaw}
%     The terminal control laws $ h_f^r $ in \eqref{eq:nlsystemsplittransformed} are different from the multi-stage MPC control law in the terminal region since the NACs are not satisfied.
%     Hence, for the closed-loop analysis, it is assumed that a common terminal control law must exist.
% \end{rmk}
\color{\changes}
\subsubsection{Terminal ingredients for a fully branched scenario tree} \label{sec:terminalingredientsfullybranched}
Let us consider the case when the scenario tree is fully branched i.e. $ N_R = N $. 
The stabilizing control law for each parameter realization inside the terminal region cannot be determined independently due to the NACs at the leaf node. 
Therefore, a common terminal control law must exist (i.e. $ h_f^{r}(x) = \kappa_f(x) $, for all $r$).

After linearizing \eqref{eq:uncertainNLsystem} as outlined in \cref{sec:terminalingredientsrobusthorizon}, the following steps are used to determine the terminal ingredients for the adaptive horizon multi-stage MPC with a fully branched scenario tree:
\begin{enumerate}
    \item Select matrices $Q$, and $R$, for the LQR and choose a $K$ such that the common terminal control law $\kappa_f(x) = -Kx$ is stabilizing for all parameter realizations $r$, and $-Kx \in \mathbb U$, $\forall x \in \mathbb X$,
    \item Solve for $P_r$ in the discrete Lyapunov equation $A_r^\top P_rA_r + Q = P_r^\top $, for each parameter realization $r$,
    \item Use $Q$, $R$, $A_{K_r}$ to obtain the terminal radii for each parameter realization $r$ by evaluating \eqref{eq:cf} in \cref{thm:msnmpcterminalradius} where $A_{K_r} = A_r - B_rK$.
    Then pick the smallest as the common terminal region's radius,
    \item Obtain the terminal cost function as the weighted sum $ \psi(x) = \allowbreak \sum_r {x^r}^\top \allowbreak P_r \allowbreak x^r$.
\end{enumerate}
Steps 3 and 4 above are similar procedures as in \cref{sec:terminalingredientsrobusthorizon} only that $K$ is common for all realizations $r$.
%Then estimate $P_r$ for each parameter realization differently 
%After obtaining the common terminal control law, the terminal cost and terminal region may be estimated independently for each parameter realization.
% The same procedures in \cref{sec:terminalingredientsrobusthorizon} are followed only that instead of $h_f^{r}(x)$, the common terminal control law $\kappa_f(x)$ must be used.
\color{black}
\subsection{Problem formulation}
Now that the terminal ingredients for each parameter realization have been defined, we present the formulation for the adaptive horizon multi-stage MPC problem.
The formulation is obtained by replacing the fixed horizon $N$ in \eqref{eq:scenarioNLPformulation} with a variable horizon $N_k$ as follows:
\begin{subequations}
	\label{eq:AHscenarioNLPformulation}
	\begin{align}
		V^\mathrm{ahm}_{N_k}(x_k, \mathbf{d}^c) = & \minimize_{z_i^c, \nu_i^c} \; \sum_{c\in\mathbb{C}} \omega_c\Big(\psi(z_{N_k}^c, d_{N_k-1}^c) + \sum_{i=0}^{N_k-1} \ell(z_i^c, \nu_i^c, d_i^c) \Big) \label{eq:AHscenarioNLPobjective}\\
		\text{s.t.} \; & z_{i+1}^c = f(z_i^c, \nu_i^c, d_i^c), \quad i = 0, \dots, N_k-1 \\
		& z_0^c = x_k, \\
		& \nu_i^c = \nu_i^{c'}, \quad \{(c,c') \; | \; z_i^c = z_i^{c'}\} \label{eq:AHscenarioNLPnacs} \\
		& d_{i-1}^c = d_i^c, \quad i \geq N_R \label{eq:AHscenarioNLPconstantd}\\
		& z_i^c \in \mathbb{X}, \: \nu_i^c \in \mathbb{U}, \: d_i^c \in \mathbb{D}, \\
		& z_N^c \in \mathbb{X}_f^c, \\
		& \forall \, c, c' \in \mathbb{C} \notag
	\end{align}
\end{subequations}
where $	V^\mathrm{ahm}_{N_k} $ is the optimal cost, and $N_k$ is the prediction horizon of the current MPC iteration given by a horizon update algorithm.
The horizon update algorithm requires one-step-ahead predictions obtained from parametric NLP sensitivities that are discussed in the next subsection.

\subsection{NLP sensitivity}
\textcolor{\changes}{Parametric NLP sensitivities are of our interest because problem \eqref{eq:AHscenarioNLPformulation} is parametric in $ x_k $ and $ \mathbf{d}^c $, and we will use them in the horizon update algorithm in \cref{sec:horizonupdatealg}, and for stability analysis in \cref{sec:stabilityproperties}.
Problem \eqref{eq:AHscenarioNLPformulation} can be written as a general parametric NLP in the form:}
\begin{equation} \label{eq:parametricNLP}
    \minimize_w \: J(w, p) \quad \text{s.t.} \quad h(w, p) = 0, \quad g(w, p) \leq 0
\end{equation}
where, $ w \in \mathbb{R}^{n_w} $  is the vector of all optimization variables ($z_i^c$, $\nu_i^c$),
and $ p \in \mathbb{R}^{n_p} $ is the vector of NLP parameters ($ x_k $, $ \mathbf{d}^c $).
The problem \eqref{eq:parametricNLP} has a scalar objective function $ J \from \mathbb{R}^{n_w+n_p} \mapsto \mathbb{R}  $, equality constraints $ h \from \mathbb{R}^{n_w+n_p} \mapsto \mathbb{R}^{n_e} $, and inequality constraints $ g \from \mathbb{R}^{n_w+n_p} \mapsto \mathbb{R}^{n_i} $.
Then the function $\mathcal{L}(w, \lambda, \mu, p) = J(w, p) + \lambda^\top h(w, p) + \mu^\top g(w, p)$ 
is the Lagrange function of \eqref{eq:parametricNLP} where, $ \lambda $ and $ \mu $ are the Lagrange multipliers of appropriate dimension.

A point $ w^\star $ that \eqref{eq:parametricNLP} satisfies the \textit{Karush-Kuhn-Tucker} conditions is known as a KKT point \citep{nocedal2006numerical}.
Given a parameter $ p $, and a point $ w^\star $ that satisfy:
\begin{align} \label{eq:KKTconditions}
	\begin{split} 
		\nabla_{w} \mathcal{L}(w^\star, \lambda^\star, \mu^\star, p) = 0 \\
		h(w^\star, p) = 0 \\
		g(w^\star, p) \leq 0 \\
		0 \leq \mu^\star \: \bot \: g(w^\star, p) \leq 0
	\end{split}	 
\end{align}
for some multipliers $ (\lambda, \mu) $, where $ w^\star $ is called a KKT point. 
The set of all multipliers $ \lambda $ and $ \mu $ that satisfy the KKT conditions for a certain parameter $ p $ is denoted as $ \mathcal{M}(p) $.
The active constraint set is given by $ \mathcal{A}(w^\star) = \{j \; | \; g_j(w^\star, p) = 0\} $.
$\perp$ is the complementarity operator, i.e. either $\mu$ or $g$ (or both) must be zero.
Then we define strict complementarity as follows.
\begin{defn}
	(SC, \citep{nocedal2006numerical}) At the KKT point $ w^\star $ of \eqref{eq:parametricNLP} with multipliers ($ \lambda^\star $, $ \mu^\star $), strict complementarity (SC) condition holds if $ \mu_j + g_j(w^\star, p) > 0 $ for all $ j \in \mathcal A(w^\star) $.
\end{defn}
For the KKT conditions to be first-order necessary optimality conditions, the KKT point $ w^\star $ requires a constraint qualification to be satisfied.
Further, it requires a constraint qualification to be a local minimizer of \eqref{eq:parametricNLP}.
The following is the definition of one well-known constraint qualification.
\begin{defn}
	(LICQ, \citep{nocedal2006numerical}) The linear independence constraint qualification (LICQ) holds at $ w^\star $ when the gradient vectors $\nabla h_i(w^\star, p)$ {and} $\nabla g_j(w^\star, p)$ for all $ i$%= 1,\; \dots,\; n_e$
	, and for all $ j \in \mathcal A(w^\star)$ are linearly independent.
	LICQ implies that the multipliers $ \lambda^\star $, $ \mu^\star $ are unique i.e. $ \mathcal{M}(p) $ is a singleton.
\end{defn}
\begin{comment}
 \begin{defn}
	(MFCQ, \citep{nocedal2006numerical}) The Mangasarian-Fromovitz constraint qualification (MFCQ) holds at $ w^\star $ if and only if, $ \nabla h(w^\star, p) $ has full column rank (linear independent), and there exists a non-zero vector $ q $, such that $\nabla h(w^\star, p)^\top q = 0$ and $\nabla g(w^\star, p)^\top q < 0$ for all $ j \in \mathcal A(w^\star)$.
	MFCQ implies that $ \mathcal{M}(p) $ is a compact convex polytope \citep{gauvin1977necessary}.
\end{defn}   
\end{comment}
Given LICQ, the KKT conditions are necessary but not sufficient conditions for optimality.
A second-order condition is needed to guarantee a minimizer:
\begin{defn}
	(SSOSC, \citep{fiacco1976sensitivity}) The strong second-order sufficient condition (SSOSC) holds at $ w^\star $ with multipliers  $ \lambda^\star $ and $ \mu^\star $ if $q^\top \nabla_{ww}^2 \mathcal{L}(w^\star, \lambda^\star, \mu^\star, p)q > 0$ for all vectors $q \neq 0$,
	such that $	\nabla h_i(w^\star, p)^\top q = 0$, for all $i$ and $\nabla g_j(w^\star, p)^\top q = 0$ for all $ j \in \mathcal{A}(w^\star) \cap \{j \, | \, \mu_j > 0\}$.
\end{defn}
\begin{comment}
 \begin{defn}
	(GSSOSC, \citep{ralph1995directional}) 
	The general strong second-order sufficient condition (GSSOSC) holds at $ w^\star(p) $ if $ 	q^\top  \nabla_{ww}^2 \mathcal{L}(w^\star, \allowbreak \lambda^\star, \allowbreak \mu^\star, \allowbreak p) q > 0 $ for all vectors $q \neq 0$ such that $\nabla c(w^\star,\allowbreak p)^\top  q = 0$ holds for all $ \lambda^\star, \mu^\star \in \mathcal{M}(p)$.
\end{defn}   
\end{comment}
Assuming SC, LICQ, and SSOSC and applying the Implicit Function Theorem (IFT) to the KKT conditions in \eqref{eq:KKTconditions} leads to \cref{thm:IFTtoKKT}.
\begin{thm} \label{thm:IFTtoKKT}
	(IFT applied to KKT conditions)
	Assume a KKT point $w^\star(p_0) $ that satisfies \eqref{eq:KKTconditions}, and that SC, LICQ and SSOSC hold at $ w^\star(p_0) $.
	Furthermore, the functions $ F $, $ h $, and $ g $ are at least $ k + 1 $ times differentiable in $ w $ and $ k $ times differentiable in $ p $.
	Then the primal-dual solution $ s^\star(p)^\top = [ w^\star(p)^\top, \lambda^\star(p)^\top, \mu^\star(p)^\top ] $ has the following are the properties:
	\begin{itemize}
		\item $ s^\star(p_0) $ is an isolated local minimizer of \eqref{eq:parametricNLP} at $ p_0 $ and contains unique multipliers $ (\lambda^\star(p_0),\: \mu^\star(p_0)) $.
		\item For $ p $ in a neighborhood of $ p_0 $ the active constraint set $ \mathcal{A}(w^\star) $ remains unchanged.
		\item For $ p $ in a neighborhood of $ p_0 $ there exists a unique, continuous and differentiable function $ s^\star(p) $ which is a local minimizer satisfying SSOSC and LICQ for \eqref{eq:parametricNLP} at $ p $.
		\item There exists positive Lipschitz constants $ L_s $, $ L_v $ such that $ |s^\star(p) - s^\star(p_0) | \leq L_s | p - p_0 | $ and $ | J(p) - J(p_0) | \leq L_v | p - p_0 | $, where $|\cdot| $ is the Euclidean norm.
	\end{itemize}
\end{thm}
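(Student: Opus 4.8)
The plan is to recognize this as the classical Fiacco-type sensitivity result and to obtain it by applying the Implicit Function Theorem (IFT) to a locally reduced KKT system. Write $\mathcal{A} := \mathcal{A}(w^\star(p_0))$ for the active set at $p_0$. First I would use strict complementarity: every $j \in \mathcal{A}$ carries a multiplier $\mu_j^\star > 0$, while every inactive constraint satisfies $g_j < 0$ together with $\mu_j^\star = 0$. By continuity these strict inequalities persist for $(w,\mu,p)$ near $(w^\star(p_0),\mu^\star(p_0),p_0)$, so on such a neighborhood the complementarity part of \eqref{eq:KKTconditions} is equivalent to holding $\mu_j \equiv 0$ for $j \notin \mathcal{A}$ and imposing the equalities $g_j(w,p) = 0$ for $j \in \mathcal{A}$. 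This converts the KKT system into the nonlinear system
\[
  \Phi(s,p) \;:=\; \bigl[\, \nabla_w \mathcal{L}(w,\lambda,\mu,p)^\top,\;\; h(w,p)^\top,\;\; g_{\mathcal{A}}(w,p)^\top \,\bigr]^\top \;=\; 0
\]
in the unknown $s = (w,\lambda,\mu_{\mathcal{A}})$, which is square (as many equations as unknowns).

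The hard part is showing that the Jacobian $\nabla_s \Phi$ evaluated at $(s^\star(p_0),p_0)$ is nonsingular, since that is precisely what the IFT requires. This Jacobian is the saddle-point (KKT) matrix
\[
  \begin{bmatrix}
    \nabla^2_{ww}\mathcal{L} & \nabla_w h^\top & \nabla_w g_{\mathcal{A}}^\top \\
    \nabla_w h & 0 & 0 \\
    \nabla_w g_{\mathcal{A}} & 0 & 0
  \end{bmatrix},
\]
and I would establish nonsingularity by a null-space argument: if $(u,v_1,v_2)$ lies in the kernel, the last two block rows give $\nabla_w h\,u = 0$ and $\nabla_w g_{\mathcal{A}}\,u = 0$, and left-multiplying the first block row by $u^\top$ then leaves $u^\top \nabla^2_{ww}\mathcal{L}\,u = 0$. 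Because SC makes the cone in the SSOSC definition coincide with $\{u : \nabla_w h\,u = 0,\ \nabla_w g_{\mathcal{A}}\,u = 0\}$, SSOSC forces $u = 0$; then LICQ, i.e.\ full row rank of the stacked gradients of the equality and active inequality constraints, forces $v_1 = 0$ and $v_2 = 0$. Everything else is bookkeeping around this lemma.

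With nonsingularity in hand, the IFT (using the stated $C^{k+1}$-in-$w$ and $C^{k}$-in-$p$ regularity of the objective and constraint functions) produces a unique $C^{k}$ map $p \mapsto s^\star(p)$ on a neighborhood of $p_0$ with $\Phi(s^\star(p),p) = 0$ and $s^\star(p_0)$ as given; extending it by $\mu_j^\star(p) = 0$ for $j \notin \mathcal{A}$ yields a full primal-dual triple. Shrinking the neighborhood preserves $g_j(w^\star(p),p) < 0$ for inactive $j$ and $\mu_j^\star(p) > 0$ for active $j$, which shows the active set is locally constant and SC persists. Since full rank and positive definiteness on the relevant subspace are open conditions, LICQ and SSOSC also hold along $s^\star(p)$; hence each $w^\star(p)$ satisfies the second-order sufficient optimality conditions and is therefore a strict --- thus isolated --- local minimizer with unique multipliers, which gives the first three items.

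For the Lipschitz bounds I would note that $s^\star(\cdot)$, being $C^1$, has a continuous Jacobian $\nabla_p s^\star$, hence one bounded by some $L_s$ on a compact neighborhood of $p_0$; the mean-value inequality then gives $|s^\star(p) - s^\star(p_0)| \leq L_s |p - p_0|$. Likewise the optimal-value map $p \mapsto J(w^\star(p),p)$ is a composition of smooth maps, hence $C^1$, and its gradient is bounded by some $L_v$ on the same neighborhood --- equivalently, the envelope identity $\nabla_p J(w^\star(p),p) = \nabla_p \mathcal{L}(s^\star(p),p)$ makes this gradient manifestly continuous --- which yields $|J(p) - J(p_0)| \leq L_v |p - p_0|$ and completes the argument.
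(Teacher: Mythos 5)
Your proposal is correct and is essentially the classical Fiacco (1976) argument that the paper itself relies on: the paper gives no proof beyond the citation, and your reduction of the KKT system under strict complementarity, the null-space nonsingularity lemma via SSOSC and LICQ, the IFT conclusion with locally constant active set, and the mean-value Lipschitz bounds are exactly the steps of that reference. The only cosmetic remark is that ``strict, thus isolated'' is better justified by noting that the IFT at fixed $p_0$ already gives local uniqueness of the KKT point, since strictness alone does not imply isolation in general.
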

\begin{proof}
	see \citet{fiacco1976sensitivity}.
\end{proof}
\textcolor{\changes}{The theorem above establishes Lipschitz continuity in the optimal solution and optimal objective function of \eqref{eq:AHscenarioNLPformulation} with respect to $x_k$ and $\mathbf{d}^c$.
This result is important for the stability analysis.}
\cref{thm:IFTtoKKT} ensures we can always obtain from \eqref{eq:KKTconditions} the following linear system for NLP sensitivity if SC, LICQ, and SSOSC hold at a KKT point $ w^\star(p) $,
\begin{equation} \label{eq:KKTmatrixsens}
	\begin{bmatrix}
		\nabla_{ww}^2 \mathcal L 		& \nabla h	& \nabla g_{\mathcal A} \\
		\nabla h^\top  	                & 0 	    & 0 					\\
		\nabla g_{\mathcal A}^\top  	& 0 		& 0 						
	\end{bmatrix}
	\begin{bmatrix}
		\nabla_{p} w^\star 		\\
		\nabla_{p} \lambda^\star 	\\
		\nabla_{p} \mu^\star_\mathcal{A}						
	\end{bmatrix}
	= \begin{bmatrix}
		\nabla_{wp}^2 \mathcal L \\
		\nabla h^\top  \\
		\nabla g_{\mathcal A}^\top 
	\end{bmatrix}
\end{equation}
where all the derivatives of are evaluated at $ w^\star $, $ p_0 $, $ (\lambda^\star, \mu^\star) \in \mathcal{M}(p_0) $, and $ g_{\mathcal A} $ and $ \mu^\star_{\mathcal A} $ are vectors with elements $ g_j $ and $ \mu^\star_j $ for all $ j \in \mathcal A(w^\star) $, respectively.
The Taylor expansion of the primal solution at $p_0$ is $ w^\star(p) = w^\star(p_0) + {\nabla_{p}w^\star}^\top(p - p_0) + \mathcal O (|p - p_0|^2) $.
As a result, a first-order approximation is obtained in the neighborhood of $p_0$ using:
\begin{equation} \label{eq:nlpsenslinearapprox}
    w^\star(p) \approx w^\star(p_0) + {\nabla_{p}w^\star}^\top(p - p_0)
\end{equation}
\textcolor{\changes}{The KKT matrix in \eqref{eq:KKTmatrixsens} is evaluated at $p_0$ and is obtained for free from the previous solution.
The benefit of the sensitivity step \eqref{eq:nlpsenslinearapprox} is to perform one-step-ahead predictions of the solution at the subsequent MPC iteration with a minimal computational effort compared to solving the NLP from scratch.
The following subsection presents the horizon update algorithm for adaptive horizon multi-stage MPC using the sensitivity update.}
\begin{comment}
    The weakest conditions that ensure a perturbed solution of \eqref{eq:parametricNLP} is locally unique are MFCQ and GSSOSC \citep{kojima1980strongly}.
    Further, Lipschitz continuity of $ w^\star(p) $ with respect to $ p $ is guaranteed.
\end{comment}
\subsection{Horizon update algorithm} \label{sec:horizonupdatealg}
The algorithm begins by selecting a long horizon $ N_\text{max} $, a safety factor $N_{\text{min}} > N_R$, and computing the terminal ingredients offline.
In each MPC iteration, the algorithm determines a horizon update such that all scenarios reach their terminal regions irrespective of the initial state in the subsequent MPC iteration. 
\begin{figure}%[htbp]
	\centering
	\includegraphics[width=0.7\linewidth]{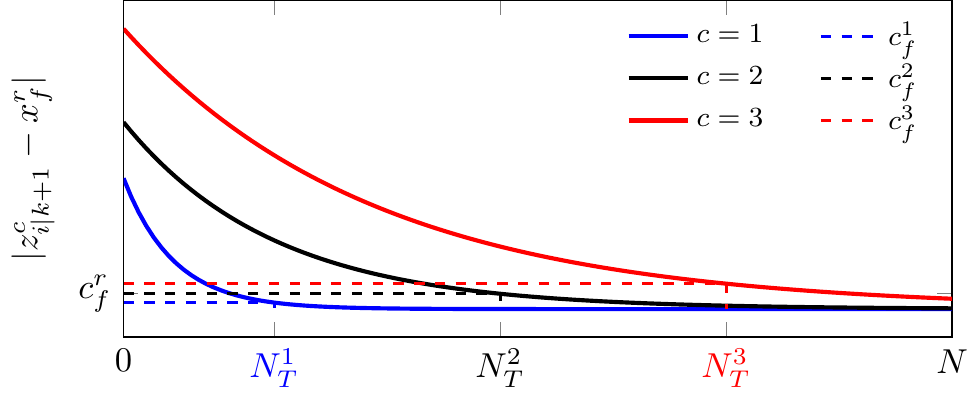}
	\caption{Sensitivity prediction (solid lines) of \eqref{eq:AHscenarioNLPformulation} for three scenarios at $t_{k+1}$, and the terminal regions (below dashed lines). 
	$N_{k+1}$ will be reduced from $ N $ to at least $ N_T^3 $.}
	\label{fig:trajectory}
\end{figure}
\cref{fig:trajectory} is a simple illustration of how a new horizon length is determined for three scenarios.
The one-step-ahead prediction of the solution to \eqref{eq:AHscenarioNLPformulation} from a sensitivity update is in solid lines and the terminal regions for each scenario are below the dashed lines.
Each scenario reaches its respective terminal region at stage $N_T^c$. 
To ensure that each scenario reaches its terminal region at $ t_{k+1} $, the new prediction horizon must be at least equal to the largest $N_T^c$.
Therefore in \cref{fig:trajectory} $N_{k+1}$ must be greater than $N_T^3$. 

\cref{alg:NLPsensalg} presents the horizon update algorithm for multi-stage MPC.
\begin{algorithm}
\caption{Horizon update for multi-stage MPC}\label{alg:NLPsensalg}
\begin{algorithmic}[1]
\State Define $N_\text{max}$, $N_\text{min} > N_R$
\State Determine $\mathbb X_f^r$, $P_r$ for all $d^r \in \mathbb D$
\State Initialize: $k \gets 0$, $N_0 \gets N_\text{max}$
\State $w^\star(x_k) \gets$ Solve \eqref{eq:AHscenarioNLPformulation} with $N_k$ and initial state $x_k$
\State $ \mathcal X_{k+1} \gets \{x_{k+1}^r \: | \: x_{k+1}^r = f(x_k, u_k, d^r)\: \forall d^r \in \mathbb D\}$
\State $\nabla_{x_k}w^\star \gets $ Evaluate \eqref{eq:KKTmatrixsens} to obtain NLP sensitivities at $x_{k}$ 
\State $\mathcal W_{k+1}^\star \gets \{\text{get} \: w^\star(x_{k+1}^r) \: \text{using \eqref{eq:nlpsenslinearapprox}} \: \forall x_{k+1}^r \in \mathcal X_{k+1}\}$ 
\ForAll {$w^\star(x_{k+1}^r) \in  W_{k+1}^\star$} 
    %\State Obtain $z^c_{N_k|k+1}$ for all $c \in \mathbb C $
    \If{$z^c_{N_k|k+1}\in \mathbb{X}_f^r \: \forall c \in \mathbb C$} 
        \State $N_T^c \gets $ step at which $\mathbb{X}_f^r$ is reached
        \State $N_T(x_{k+1}^r) \gets \maximize_{c\in\mathbb C} N_T^c$ %\Comment{$N_T^c$ is step where $\mathbb{X}_f^r$ is reached}
    \Else 
        \State $N_T(x_{k+1}^r) \gets N_\text{max}$
    \EndIf
\EndFor
\State $N_T \gets \maximize_{x_{k+1}^r \in \mathcal X_{k+1}} N_T(x_{k+1}^r) $
\State $N_{k+1} \gets \minimize(N_\text{max},  N_T + N_\text{min})$
\State $k \gets k+1$
\State Go to Step 4 
\end{algorithmic}
\end{algorithm}
The flowchart in \cref{fig:NLPsensalg} summarizes \cref{alg:NLPsensalg} which is a mapping of the current state and prediction horizon to the subsequent prediction horizon.
The mapping is defined as a function in \cref{def:horizonupdate} as follows:
\begin{figure}%[htbp]
	\centering
	\includegraphics[scale=1.0]{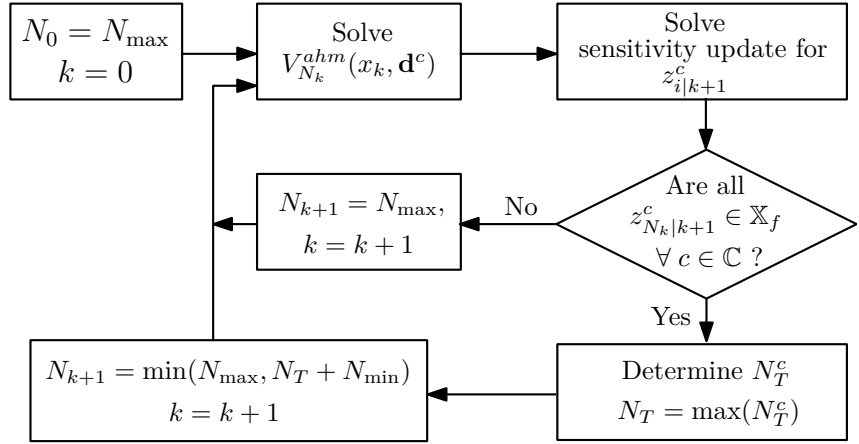}
	\caption{Horizon update algorithm for multi-stage MPC that gives admissible prediction horizons for subsequent MPC iterations.}
	\label{fig:NLPsensalg}
\end{figure}
\begin{defn} \label{def:horizonupdate}
	(Horizon update function)
	Let $ \mathcal N $ be the universal set of admissible horizon lengths such that $ \mathcal N = \{ N \,|\,N_\text{min} \leq N \leq N_\text{max}, \, N \in \mathbb{Z}_+\} $.
	Further, there exists a subset of horizon lengths $ \mathcal N_k \subset \mathcal N $ that define the feasible problems \eqref{eq:AHscenarioNLPformulation} at time $t_k$.
	We define a function $ H \from \mathbb{R}^{n_x} \times \mathcal N \times \mathbb{R}^{n_x} \mapsto \mathcal N $ that determines the horizon lengths for the adaptive horizon multi-stage algorithm such that $ N_{k+1} = H(x_k, N_k, z^c_{k+1|k}) \in \mathcal N_{k+1} $.
\end{defn}
\section{Stability properties of the adaptive horizon multi-stage MPC} \label{sec:stabilityproperties}
\textcolor{\changes}{Applying the optimal control input computed by the MPC controller to system \eqref{eq:uncertainNLsystem} results in a closed-loop uncertain system whose dynamics are denoted by $x_{k+1} = F(x_k, d_k)$. 
We will perform a stability analysis for this closed-loop uncertain system.
Input-to-state practical stability (ISpS) is the suitable framework to analyze the stability properties of such a system. We start with recollecting some definitions in ISpS stability analysis of discrete-time nonlinear systems:}
\begin{defn}
	(Comparison functions \citep{SONTAG1989}) \textcolor{\changes}{A function $ \alpha \from \mathbb{R}_+ \mapsto \mathbb{R}_+ $ is said to be of class $ \mathcal{K} $ if $ \alpha(0) = 0 $, $ \alpha(n) > 0 $ for all $ n > 0 $ and is strictly increasing.
	It becomes of class $ \mathcal{K}_\infty $ if in addition, $ \alpha $ is unbounded.
	A function $ \beta \from \mathbb{R}_+ \times \mathbb{Z}_+ \mapsto \mathbb{R}_+ $ is of class $ \mathcal{KL} $ if $ \beta(\cdot,k) $ is of class $ \mathcal{K} $ for each fixed $ k\geq0 $, and $ \beta(n, \cdot) $ is decreasing for each fixed $ n\geq0 $ and $ \beta(n,k) \rightarrow 0 $ as $ k \rightarrow \infty $.}
\end{defn}
\begin{defn}
	\textcolor{\changes}{(Regional input-to-state practical stability (ISpS), \cite{sontag1996new})}
	An autonomous system $ x^+ = F(x,d) $ is ISpS in $ \Gamma \subseteq \mathbb R^n $ and $ 0 \in \Gamma $ if there exists a $ \mathcal{KL} $ function $ \beta $, a $ \mathcal K $ function $ \gamma $, \textcolor{\changes}{and a constant $ c \geq 0 $ such that for all $ k\geq0 $
	\begin{equation} \label{eq:ISpS}
		|x_k| \leq \beta(|x_0|,k) + \gamma(\|\mathbf{d}_{[0, k-1]} - \mathbf{d}^0\|) + c, \quad \forall \, x_0 \in \Gamma
	\end{equation}}
	where $\mathbf{d}_{[0, k-1]}:= \begin{bmatrix} d_0, & d_1, & \dots, & d_{k-1} \end{bmatrix}$ is the sequence of true parameter realizations and $ \mathbf{d}^0 $ is a sequence of nominal parameters of the same length.
\end{defn}
\begin{defn}
	\textcolor{\changes}{(ISpS Lyapunov function in $\Gamma$, \cite{limon2006input})}
	Suppose $ \Gamma $ is an RPI set and there exists a compact set $\Theta \subseteq \Gamma$ with the origin as an interior point.
	A function $ V \from \mathbb R^n \mapsto \mathbb{R}_+ $ \textcolor{\changes}{is an ISpS Lyapunov function in $\Gamma$} for the system $ x^+ = F(x,d) $, if there exist $ \mathcal{K}_\infty $ functions $ \alpha_1 $, $ \alpha_2 $, $ \alpha_3 $, $ \mathcal{K} $ function $ \sigma $, and constants $ c_1, c_2 \geq 0 $ such that:
	\begin{subequations} \label{eq:ISpSLyapunovFunction}
		\begin{align}
			V(x) & \geq \alpha_1(|x|), \quad \forall \, x \in \Gamma \label{eq:ISpSlowerbound}\\
			V(x) & \leq \alpha_2(|x|) + c_1,  \quad \forall \, x \in \Theta \label{eq:ISpSupperbound}\\
			\begin{split}
			V(x^+) - V(x) & \leq -\alpha_3(|x|) + \sigma(|d - d^0|) + c_2,  \\
			 & \quad \forall \, x \in \Gamma, \forall \, d, d^0 \in \Omega. 
			\end{split}\label{eq:ISpSdescent}
		\end{align}
	\end{subequations}
 \textcolor{\changes}{where $\Omega$ is the uncertain parameter (disturbance) set.}
\end{defn}
A standard MPC equivalent with an adaptive horizon can be obtained from \eqref{eq:AHscenarioNLPformulation} by setting $ d^c_i = d^0_i $ for all $ c \in \mathbb C $ as follows:
\begin{subequations}
	\label{eq:AHscenarioNLPnominalequiv}
	\begin{align}
		V^\mathrm{ahm}_{N_k}(x_k, \mathbf{d}^0) = & \minimize_{z_i^c, \nu_i^c}  \: \sum_{c\in\mathbb{C}} \omega_c \Big( \psi(z_{N_k}^c, d_{N_k-1}^0) + \sum_{i=0}^{N_k-1} \ell(z_i^c, \nu_i^c, d_i^0) \Big)  \label{eq:AHscenarioNLPobjectivenominalequiv} \\
		\text{s.t.} \quad & z_{i+1}^c = f(z_i^c, \nu_i^c, d_i^0), \quad i = 0, \dots, N_k-1 \\
		& z_0^c = x_k, \\
		& \nu_i^c = \nu_i^{c'}, \quad \{(c,c') \; | \; z_i^c = z_i^{c'}\} \label{eq:AHscenarioNLPnacsnominalequiv} \\
		& d_{i-1}^0 = d_i^0, \quad i = N_R, \; \dots,\; N_k-1 \label{eq:AHscenarioNLPconstantdnominalequiv}\\
		& z_i^c \in \mathbb X, \: \nu_i^c \in \mathbb U, \: d_i^0 \in \mathbb{D}, \\
        & z_{N_k}^c \in \mathbb X_f^c, \\
		& \forall \, c, c' \in \mathbb{C}, \ \notag
	\end{align}
\end{subequations}
where \eqref{eq:AHscenarioNLPnominalequiv} effectively consists of $ c $ identical copies of the standard MPC with an adaptive horizon.

It follows from \cref{thm:IFTtoKKT} that the difference between the primal solutions $ (z_i,\allowbreak \nu_i) $ of \eqref{eq:AHscenarioNLPformulation} and \eqref{eq:AHscenarioNLPnominalequiv} is $|w^\star(\mathbf{d}^c) -w^\star(\mathbf{d}^0)| \leq L_s|\Delta \mathbf{d}| $, where $ w^\star = [{z^\star}^\top, {\nu^\star}^\top]^\top$, $ L_s > 0 $ is the Lipschitz constant, and $ |\Delta \mathbf{d}| $ is defined as:
\begin{equation} \label{eq:deltaddefinition}
     |\Delta \mathbf{d}| := \maximize_{d^r,d^{r'}\in \mathbb D} |d^r - d^{r'}|
\end{equation}
that is the maximum difference in the disturbance vector between any two parameter realizations.
Since the evolution of \eqref{eq:uncertainNLsystem} in closed-loop depends on the true disturbance $ d_k $, then $ x_{k+1} = f(x_k,\allowbreak\kappa(x_k),\allowbreak d_k)$, and $ x_{k+1} \leq f(x_k,\allowbreak \kappa(x_k),\allowbreak d_k^0) + \mathcal O(|\Delta \mathbf{d}|) $.
This result facilitates the recursive feasibility and ISpS stability analysis for the adaptive horizon multi-stage MPC that follows.
But first, let us define the following basic assumptions:
\begin{ass} \label{ass:basicstabilityassumptions}
	(Basic assumptions for adaptive horizon multi-stage MPC)
	The adaptive horizon multi-stage MPC has the following properties:
	\begin{enumerate}[A.]
		\item Lipschitz continuity: The functions $ f$, $ \ell $, and $ \psi $ are Lipschitz continuous with respect to $x$, $u$ and $d$ in the compact set $ \mathbb X \times \mathbb U \times \text{convhull}(\mathbb D) $. \label{itm:basicstabilityassumptions1}
		\item Constraint set: The constraint sets $ \mathbb X $ and $ \mathbb U $ are \textcolor{\changes}{compact, and
		contain the origin in their interiors.}
		\item \textcolor{\changes}{The solution to \eqref{eq:AHscenarioNLPformulation} satisfies LICQ and SSOSC such that \cref{thm:IFTtoKKT} applies.}
	\end{enumerate}
\end{ass}
% \textcolor{\changes}{The equality constraints in \eqref{eq:AHscenarioNLPformulation} are always feasible and their gradients have full rank because, for any fixed admissible input $\nu_i^c$ and initial condition $x_k$, there exists a unique solution.
% Therefore, the equality constraints in \eqref{eq:AHscenarioNLPformulation} are linearly independent.
% In addition, LICQ is ensured at the solution of \eqref{eq:AHscenarioNLPformulation} by including the inequalities of only one of the control input variables subjected to NACs.}
% \textcolor{\changes}{SSOSC holds at the solution of \eqref{eq:AHscenarioNLPformulation} when regularization terms for all variables are added to the objective.\footnote{Not necessary to implement if the optimizer is \texttt{IPOPT} because it is part of the solver algorithm convergence strategy \citep{wachter2006implementation}.}
 \textcolor{\changes}{With these properties in \cref{ass:basicstabilityassumptions}, the Lipschitz continuity holds w.r.t. $x_k$ and $\mathbf{d}^c$ for the optimal solution and the optimal cost $V_{N_k}^{ahm}$.\footnote{\textcolor{\changes}{The LICQ condition may be relaxed to the Mangasarian-Fromovitz constraint qualification (MFCQ), the constant rank constraint qualification (CRCQ), and the general strong second-order sufficient condition (GSSOSC). In that case, a similar Lipschitz argument can be made and applied in the following analysis. \cite{ralph1995directional,jaschke2014fast}}}}
\begin{ass} \label{ass:stagecostproperty}
	(Property of the stage cost)
	Given a common control law $\kappa \from \mathbb X \mapsto \mathbb U$ then for any parameter realization $d^r \in \mathbb{D} $, the stage cost $\ell $ is bounded as follows: 
	$ \alpha_p(|x|) \leq \ell(x,\allowbreak \kappa(x),\allowbreak d^r) \leq \alpha_q(|x|) + \sigma_q(|d^r-d^0|)$ where $ \alpha_p(\cdot),\allowbreak \; \alpha_q(\cdot) \in \mathcal K_\infty $, and $\sigma_q(\cdot) \in \mathcal K$.
\end{ass}
\textcolor{\changes}{\cref{ass:stagecostproperty} requires that the stage cost is positive, with lower and upper bounds proportional to the state at the nominal disturbance.
When the disturbance realization is not nominal, the upper bound increases by a term proportional to the distance of the disturbance realization from the nominal.}
\begin{ass} \label{ass:terminconditionsahmsnmpc}
	(Assumptions on terminal conditions of the adaptive horizon multi-stage MPC).
	Let \cref{ass:basicstabilityassumptions} hold for all parameter realizations $d^r \in \mathbb{D} $ then:
	\begin{enumerate}[A.]
		\item There exists a common terminal region $ \mathbb{X}_f = \bigcap\limits_{r \in \mathbb{Q}}\mathbb{X}_f^r $, that is \textcolor{\changes}{robust control invariant.}   \label{itm:terminconditionsahmsnmpcA}
		\item $ \mathbb{X}_f \subseteq \mathbb{X} $ is \textcolor{\changes}{compact and contains the origin in its interior.} 
		\item The terminal cost is bounded as follows: $ \alpha_{p,\allowbreak\psi}(|x|)  \leq \psi(x,\allowbreak d^r) \leq \alpha_{q,\allowbreak\psi}(|x|) + \sigma_{q,\allowbreak\psi}(|d^r - d^0|)$ where $\alpha_{p,\allowbreak\psi},\allowbreak \alpha_{q,\allowbreak\psi} \in \mathcal K_\infty$, and $\sigma_{q,\psi} \in \mathcal K$. \label{itm:terminconditionsahmsnmpc3} 
		\item The terminal cost is a local control Lyapunov function $ \psi(f(x,\allowbreak \kappa_f(x),\allowbreak d^r),\allowbreak d^r) - \psi(x,\allowbreak d^r) \leq - \ell(x,\allowbreak \kappa_f(x),\allowbreak d^r)$ for all $ x \in \mathbb{X}_f $.
	\end{enumerate}
\end{ass}
\color{\changes}
\cref{ass:terminconditionsahmsnmpc}\ref{itm:terminconditionsahmsnmpcA} is needed because a common terminal region must exist for a possible reduction of the prediction horizon.
The selection of a prediction horizon where all scenarios reach their respective terminal regions in Step 11 of  \cref{alg:NLPsensalg} implies that the common terminal region is reached.
This region may be a null set when $|\Delta \mathbf d|$ is sufficiently large.
Further, this can be determined beforehand when computing the terminal ingredients.

\subsection{Recursive feasibility of the adaptive horizon multi-stage MPC with a fully branched scenario tree} \label{sec:recursivefeasibilityfullybranched}
Before stability analysis, recursive feasibility must be guaranteed by ensuring robust constraint satisfaction.
Given the assumptions above, we first present the following result for a fully branched scenario tree i.e. $N_R = N_k$:
\begin{thm} \label{thm:recursivefeasibility}
	(Recursive feasibility of the adaptive horizon multi-stage MPC).
	Suppose \cref{ass:basicstabilityassumptions}, \ref{ass:stagecostproperty} and \ref{ass:terminconditionsahmsnmpc} hold, then problem \eqref{eq:AHscenarioNLPformulation} from a fully branched scenario tree $N_R = N_k $ is recursively feasible.
\end{thm}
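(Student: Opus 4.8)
The plan is to run the classical shift-and-extend recursive-feasibility argument, adapted to a scenario tree that branches at every stage and to a horizon that changes from $N_k$ to $N_{k+1}$. First I would fix a time $t_k$ at which \eqref{eq:AHscenarioNLPformulation} is feasible, let $\{z_{i|k}^{c\star},\nu_{i|k}^{c\star}\}$ be an optimal solution, $u_k=\nu_{0|k}^{c\star}$ the applied input, and $d_k\in\mathbb D$ the realized disturbance (the standard premise for exact recursive feasibility of a multi-stage scheme). Because the tree is fully branched ($N_R=N_k$), the root carries one edge for each realization in $\mathbb D$, so $x_{k+1}=f(x_k,u_k,d_k)=z_{1|k}^{\tilde c\,\star}$ for every scenario $\tilde c$ whose first edge is $d_k$. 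The subtree rooted at that node is itself fully branched, of depth $N_k-1$, and inherits feasibility from the optimum at $t_k$: its node trajectories satisfy the dynamics and the bounds $z_i^c\in\mathbb X$, $\nu_i^c\in\mathbb U$; two scenarios sharing a path in the shifted subtree already shared one in the original tree, so the non-anticipativity constraints \eqref{eq:AHscenarioNLPnacs} persist; and its leaves satisfy the terminal constraint, hence lie in the common terminal region $\mathbb X_f$ of \cref{ass:terminconditionsahmsnmpc}\ref{itm:terminconditionsahmsnmpcA}.

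I would then split on the updated horizon $N_{k+1}=H(x_k,N_k,z_{k+1|k}^c)$ returned by \cref{alg:NLPsensalg}. If $N_{k+1}\ge N_k-1$, take the shifted subtree and append $N_{k+1}-N_k+1\ge 0$ further fully-branched stages, using the common terminal control law $\kappa_f$ at every new node; since $\mathbb X_f$ is robust control invariant, that is, $f(x,\kappa_f(x),d^r)\in\mathbb X_f$ for all $x\in\mathbb X_f$ and all $d^r\in\mathbb D$, every appended node --- in particular every new leaf --- remains in $\mathbb X_f\subseteq\mathbb X$ with $\kappa_f(\cdot)\in\mathbb U$, the new inputs are functions of the new nodes so the NACs hold, and the dynamics hold by construction, giving a feasible point of \eqref{eq:AHscenarioNLPformulation} at $t_{k+1}$. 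If $N_{k+1}<N_k-1$, I would use that \cref{alg:NLPsensalg} returns such a short horizon only when, for the realized successor $x_{k+1}$, the one-step-ahead solution obtained from the sensitivity update \eqref{eq:nlpsenslinearapprox} has every scenario inside its terminal region no later than stage $N_T(x_{k+1})\le N_T\le N_{k+1}$; truncating that predicted trajectory at $N_{k+1}$ yields a near-feasible candidate, which I would correct into an exactly feasible one by bounding the deviation of the sensitivity prediction from a genuinely optimal trajectory through the Lipschitz property of \cref{thm:IFTtoKKT} and then spending the $N_{\min}$ safety stages --- again closed under $\kappa_f$ by robust invariance --- to pull the corrected trajectory back into $\mathbb X_f$; the perturbed trajectory stays admissible because $\mathbb X$ and $\mathbb U$ contain the origin in their interiors (\cref{ass:basicstabilityassumptions}).

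Having exhibited a feasible point of \eqref{eq:AHscenarioNLPformulation} at $t_{k+1}$ for every realized $d_k\in\mathbb D$ and every admissible $N_{k+1}$, induction on $k$ yields recursive feasibility; equivalently, the set $\mathcal N_{k+1}$ of \cref{def:horizonupdate} is nonempty and contains the horizon produced by the algorithm. I expect the shrinking-horizon case ($N_{k+1}<N_k-1$) to be the crux: unlike fixed-horizon MPC the shifted optimum need not, on its own, reach $\mathbb X_f$ within $N_{k+1}$ steps, so feasibility has to be certified through the sensitivity-based prediction, which forces one to quantify the gap between the predicted and the true re-optimized solutions via \cref{thm:IFTtoKKT} while keeping the reduced tree's branching and non-anticipativity structure consistent.
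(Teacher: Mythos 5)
Your construction is correct and is, in substance, the same argument the paper relies on --- the difference is that the paper does not carry it out: its proof consists of citing Proposition 4 of Maiworm et al.\ and Theorem 6 of Lucia et al.\ for recursive feasibility of the \emph{fixed-horizon} fully branched multi-stage MPC with a robust control invariant terminal set, followed by one sentence asserting that, because \cref{alg:NLPsensalg} only returns horizons for which every scenario reaches the common terminal region, feasibility carries over to any $N_{k+1}$. Your shift-and-extend construction (the realized successor equals a first-stage node of the fully branched tree, the shifted subtree inherits the dynamics, bounds, NACs and terminal constraints, and $\kappa_f$ extends the tree inside the robust control invariant set $\mathbb X_f$) is precisely the content of those cited results, so reconstructing it yields a more self-contained proof of the same statement. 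The one place where you go beyond the paper --- the case $N_{k+1}<N_k-1$ --- is also where your sketch over-claims: robust control invariance of $\mathbb X_f$ under $\kappa_f$ keeps a trajectory in $\mathbb X_f$ once it is there, but it cannot ``pull back'' a trajectory that the approximate sensitivity prediction wrongly certified as entering $\mathbb X_f$ by stage $N_T$, so the $N_{\min}$ safety stages do not by themselves repair an inaccurate prediction. The paper sidesteps this by taking the algorithm's terminal-region check at face value, so your version is no weaker than the published proof; to make the shrinking case airtight you would either need a quantitative bound showing the sensitivity error is absorbed (e.g.\ by certifying entry into a shrunken terminal ball), or restrict the guaranteed-feasible updates to $N_{k+1}\ge N_k-1$ and treat shorter horizons as conditional on that certification.
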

\begin{proof} 
Let \cref{ass:basicstabilityassumptions} and \ref{ass:terminconditionsahmsnmpc} hold and consider the fully branched multi-stage MPC with $N_R = N_k$.
Proposition 4 in \cite{maiworm2015scenario} and Theorem 6 in \cite{lucia2020stability} show that if a robust control invariant terminal region exists then the fully branched multi-stage MPC with a fixed horizon $N_R = N_k$ is recursively feasible.
Since the adaptive horizon, multi-stage MPC with $N_R = N_k$ ensures that the horizon is updated such that the common terminal region is always reached, it is also recursively feasible for any horizon update $N_{k+1}$ given by \cref{alg:NLPsensalg}.
\end{proof}

\subsection{Recursive feasibility of the adaptive horizon multi-stage MPC with a robust horizon} \label{sec:recursivefeasibilityrobusthorizon}
Recursive feasibility cannot be guaranteed by using the same arguments in \cref{thm:recursivefeasibility} when the scenario tree is not fully branched.
But the infeasibility of problem \eqref{eq:AHscenarioNLPformulation} when $N_R < N_k$ may be avoided by relaxing the state inequalities using soft constraints \cite{thombre2021sensitivity}.
However, robust constraint satisfaction is not guaranteed because the state trajectories may cross the feasible set $\mathbb X$.
Therefore, before proceeding to the stability analysis the following assumption must be made:
\begin{ass} \label{ass:recursivefeasibility}
    Problem \eqref{eq:AHscenarioNLPformulation} is feasible at time $t_k$ when $N_k=N_\text{max}$.
    Furthermore, if problem \eqref{eq:AHscenarioNLPformulation} at time $t_k$ with $x_k$ and $N_k$ is feasible, then so is problem \eqref{eq:AHscenarioNLPformulation} solved at time $t_{k+1}$ with $x_{k+1}$ and $ N_{k+1} = H(x_k, N_k, x_{k+1}) \in \mathcal N_{k+1} $ for all $x_k,x_{k+1} \in \mathbb X$, $N_k \in \mathcal{N}_{k+1}$.
\end{ass}
Note that \cref{ass:recursivefeasibility} can always be satisfied by choosing $N_R = N_k$. (cf. Theorem 4). The increase in the problem size with a long robust horizon required to satisfy \cref{ass:recursivefeasibility} also motivates the need for shortening the horizon length using our proposed \cref{alg:NLPsensalg}.
\cref{ass:recursivefeasibility} ensures that $H$ (see \cref{def:horizonupdate}) produces feasible horizon lengths for any robust horizon $N_R \leq N_k$, essentially implying recursive feasibility.
This allows us to present results on the ISpS property of the adaptive horizon multi-stage MPC.
\color{black}
\subsection{ISpS for adaptive horizon multi-stage MPC}
First, the relationships between the linear and nonlinear systems are needed because the nonlinear system \eqref{eq:uncertainNLsystem} is controlled by a stabilizing linear controller inside the common terminal region $\mathbb X_f$.
Consider the nominal problem \eqref{eq:AHscenarioNLPnominalequiv} inside the terminal region evaluated using a stabilizing terminal control law $ \kappa_f^0 $:
\begin{equation}\label{eq:LQRcostnominalequiv}
	V_N^\mathrm{lqr}(x_k, \mathbf{d}^0) :=  \sum_{c \in \mathbb C} \omega_c \Big (\psi(z^0_N, d_{N-1}^0) + \sum^{N-1}_{i=0} \ell(z^0_i, \kappa_f^0(z^0_i), d^0_i) \Big) \\
\end{equation}
where $z^0_{i+1} = A_0z^0_i + B_0\nu^0_i + \psi(z^0_i, d^0), \; \forall i = 0, \dots, N-1$, and $ V_N^{lqr}(x_k, \mathbf{d}^0) $ is the nominal LQR cost.
\textcolor{\changes}{The relationships between the optimal costs of the nominal equivalents of the LQR and adaptive horizon multi-stage MPC in the terminal region are given in \cref{lem:effectoflinoncosts}.}
\begin{lem} \label{lem:effectoflinoncosts}
	(Effect of linearization on costs inside the terminal region)
	There exists $ \alpha_n \in \mathcal K_\infty $ such that $ |V^\mathrm{lqr}_N(x, \mathbf{d}^0) - \psi(x, d^0)| \leq \alpha_n(|x|) $ and $ \alpha_v \in \mathcal K_\infty $  such that  $ | \psi(x, d^0) - V_N^\mathrm{ahm}(x, \mathbf{d}^0)| \leq \alpha_v(|x|)$ for all $ x \in \mathbb X_f $ and for all $N \in \mathcal N$. 
\end{lem}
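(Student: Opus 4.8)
The plan is to establish each of the two inequalities separately, exploiting the fact that inside the common terminal region $\mathbb X_f$ every scenario's trajectory can be generated by the stabilizing terminal control law, and that the terminal cost $\psi$ dominates the stage-cost accumulation (Assumption~\ref{ass:terminconditionsahmsnmpc}D). First I would treat $|V_N^\mathrm{lqr}(x,\mathbf d^0) - \psi(x,d^0)|$. For each scenario $c$, starting from $z_0^c = x \in \mathbb X_f$ and iterating $z_{i+1}^0 = A_0 z_i^0 + B_0\kappa_f^0(z_i^0) + \phi(z_i^0,d^0)$, the nonlinear residual $\phi$ satisfies the higher-order bound $|\phi(x,d^0)| \le M|x|^q$ with $q>1$ from \cref{thm:linerrbound}; combined with the contraction of $A_{K_0}$ (max singular value $\bar\sigma_0 < 1$) this gives a geometric decay $|z_i^0| \le C\bar\sigma_0^{\,i}|x|$ for $x$ in the (compact) terminal region. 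Then $V_N^\mathrm{lqr} - \psi$ is a finite sum of stage costs along a contracting trajectory plus a telescoping terminal-cost difference; each stage cost is bounded by $\alpha_q(|z_i^0|) + \sigma_q(0) = \alpha_q(C\bar\sigma_0^{\,i}|x|)$ via \cref{ass:stagecostproperty} at the nominal disturbance, and summing the geometric-type series yields a single $\mathcal K_\infty$ bound $\alpha_n(|x|)$ uniform in $N \in \mathcal N$ (the tail beyond any finite $N$ only shrinks the sum, so uniformity in $N$ is automatic).

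For the second inequality, $|\psi(x,d^0) - V_N^\mathrm{ahm}(x,\mathbf d^0)|$, I would sandwich $V_N^\mathrm{ahm}$. The upper bound $V_N^\mathrm{ahm}(x,\mathbf d^0) \le \psi(x,d^0) + \alpha_v(|x|)$ comes from feasibility of the terminal control law as a suboptimal choice: applying $\kappa_f^0$ for all $N$ steps keeps every scenario in $\mathbb X_f$ (robust control invariance, \cref{ass:terminconditionsahmsnmpc}A), so $V_N^\mathrm{ahm}$ is no larger than the cost of that policy, which by the control-Lyapunov descent property \cref{ass:terminconditionsahmsnmpc}D telescopes to at most $\psi(x,d^0)$ — actually exactly $\le \psi(x,d^0)$, giving $\alpha_v$ to absorb only the linearization-versus-nonlinear discrepancy between this suboptimal policy and the true $f$-dynamics, again controlled by the $|x|^q$-type residual. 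For the lower bound $V_N^\mathrm{ahm}(x,\mathbf d^0) \ge \psi(x,d^0) - \alpha_v(|x|)$, I would use $V_N^\mathrm{ahm}(x,\mathbf d^0) \ge \sum_c \omega_c \psi(z_{N}^c, d^0_{N-1}) \ge 0$ together with the fact that along any feasible trajectory the accumulated stage cost plus terminal cost cannot undercut $\psi(x,d^0)$ by more than the linearization gap, invoking \cref{ass:terminconditionsahmsnmpc}D once more in the form $\psi(x,d^0) \le \ell(x,\kappa_f^0(x),d^0) + \psi(f(x,\kappa_f^0(x),d^0),d^0)$ and unrolling. Taking $\alpha_v$ as the larger of the two one-sided $\mathcal K_\infty$ bounds closes the argument, and uniformity over $N\in\mathcal N$ follows as before since $\mathcal N$ is a finite set of horizons and each bound is monotone in $N$.

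The main obstacle I anticipate is making the geometric-decay estimate $|z_i^0| \le C\bar\sigma_0^{\,i}|x|$ rigorous and \emph{uniform over all parameter realizations} simultaneously: the residual bound from \cref{thm:linerrbound} is per-realization ($M_r, q_r$), and in the fully branched case the shared control law $\kappa_f$ forces one to take the worst contraction factor $\max_r \bar\sigma_r$ and worst residual constants, which must still be strictly contractive — this is exactly what the terminal-radius construction in \cref{sec:terminalingredientsfullybranched} (picking the smallest $c_f^r$) guarantees, so I would lean on that. A secondary technical point is that \cref{ass:stagecostproperty} and \cref{ass:terminconditionsahmsnmpc}C only bound $\ell$ and $\psi$ by $\mathcal K_\infty$ functions of $|x|$, not necessarily quadratics, so the "summable geometric series" step really needs $\alpha_q,\alpha_{q,\psi}$ evaluated at geometrically decaying arguments to be summable; this holds for any $\mathcal K_\infty$ function since $\sum_i \alpha_q(C\bar\sigma_0^{\,i}|x|) \le \sum_i \alpha_q(C\bar\sigma_0^{\,i}r_{\max})$ converges by comparison once $\alpha_q$ is bounded on the compact terminal region, but I would state it carefully to avoid a gap.
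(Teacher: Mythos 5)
Your proposal is essentially sound, but it takes a different route from the paper: the paper's proof is a one-line reduction — since $\mathbf d^c = \mathbf d^0$, problem \eqref{eq:AHscenarioNLPnominalequiv} collapses into identical copies of the \emph{standard} nominal adaptive-horizon MPC, so Lemmas 18 and 19 of \cite{griffith2018robustly} apply verbatim and deliver both bounds — whereas you reconstruct the content of those cited lemmas from scratch (geometric decay of the closed-loop trajectory under $\kappa_f^0$ inside $\mathbb X_f$, summability of $\mathcal K_\infty$ bounds on the stage costs, and a sandwich on $V_N^\mathrm{ahm}$ via feasibility of the terminal law and the control-Lyapunov inequality). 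What the paper's approach buys is brevity and the observation that \emph{all} scenario- and realization-dependence disappears at the nominal parameter; what your approach buys is self-containedness, at the cost of having to make the contraction estimate $|z^0_i|\le C\bar\sigma_0^i|x|$ rigorous — which you correctly flag, although note it only needs to hold for the single nominal realization $d^0$, so your worry about uniformity over all realizations and the worst-case $\max_r \bar\sigma_r$ is not actually needed for this lemma (it matters for the terminal-region construction in \cref{sec:terminalingredientsfullybranched}, not here). One simplification you could make: for the second bound, the nominal CLF property (\cref{ass:terminconditionsahmsnmpc}D with $d^r=d^0$) gives $V_N^\mathrm{ahm}(x,\mathbf d^0)\le \psi(x,d^0)$ directly, and both quantities are nonnegative, so $|\psi(x,d^0)-V_N^\mathrm{ahm}(x,\mathbf d^0)|\le \psi(x,d^0)\le \alpha_{q,\psi}(|x|)$ by \cref{ass:terminconditionsahmsnmpc}C, which avoids your more involved "unrolling" argument for the lower bound; a similarly crude bound also suffices for the first inequality since the lemma only asserts existence of some $\mathcal K_\infty$ majorant (the smallness needed for stability is deferred to \cref{ass:nominalahnmpcnleffects}).
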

\begin{proof}
	Because this is a nominal problem, the result holds because of Lemmas 18 and 19 in \cite{griffith2018robustly}.
\end{proof}
\begin{ass} \label{ass:nominalahnmpcnleffects}
	The solution to \eqref{eq:AHscenarioNLPnominalequiv} with a prediction horizon $ N_k \geq N_\text{min} $ satisfies:
	\begin{subequations}
		\begin{align}
			\alpha_n(|z^0_{N_k|k}|) - \alpha_p(|x_k|) & \leq -\alpha_3(|x_k|) \text{ if } N_{k+1} \geq N_k \label{eq:nominalahnmpceffects1}\\
			\alpha_v(|z^0_{N_{k+1}+1|k}|) - \alpha_p(|x_k|)  & \leq -\alpha_3(|x_k|) \text{ if } N_{k+1} < N_k \label{eq:nominalahnmpceffects2}
		\end{align}
	\end{subequations}
	for some $ \alpha_3 \in \mathcal K_\infty $, where $ N_{k+1} = H(x_k, N_k, z^0_{k+1|k}) \in \mathcal N_{k+1} $, $ \alpha_p $ satisfy \cref{ass:stagecostproperty}, and $ \alpha_n $, $ \alpha_v $ satisfy \cref{lem:effectoflinoncosts}.
\end{ass}
\textcolor{\changes}{This assumption implies that the approximation error from using the stabilizing linear control law inside the terminal region must be negligible.} 
\cref{ass:nominalahnmpcnleffects} can be satisfied by finding a suitably large $N_\text{min}$ that is determined through simulations as explained in \cite{griffith2018robustly}.
\begin{thm}
	(ISpS stability of adaptive horizon multi-stage MPC)  
	Suppose \cref{ass:stagecostproperty} and \ref{ass:terminconditionsahmsnmpc} hold for \eqref{eq:AHscenarioNLPformulation}, the optimal value function $ V^{ahm} $ is an ISpS Lyapunov function and the resulting closed-loop system is ISpS stable.
\end{thm}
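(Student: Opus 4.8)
The plan is to establish that $V^{\mathrm{ahm}}$ is an ISpS Lyapunov function in the sense of \eqref{eq:ISpSLyapunovFunction} on the feasible set $\Gamma$ of \eqref{eq:AHscenarioNLPformulation} with $N_k=N_\text{max}$, taking $\Theta=\mathbb X_f$ and $\Omega=\text{convhull}(\mathbb D)$; then the ISpS estimate \eqref{eq:ISpS} for the closed loop $x_{k+1}=F(x_k,d_k)$ follows from the standard result that an ISpS Lyapunov function implies regional ISpS \cite{limon2006input}. The set $\Gamma$ is robust positively invariant because recursive feasibility holds --- \cref{thm:recursivefeasibility} for a fully branched tree and \cref{ass:recursivefeasibility} (equivalently, the horizon map $H$ of \cref{def:horizonupdate}) for a robust horizon --- so it only remains to verify the three inequalities \eqref{eq:ISpSlowerbound}--\eqref{eq:ISpSdescent}.

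The lower bound \eqref{eq:ISpSlowerbound} is immediate: in \eqref{eq:AHscenarioNLPobjective} the terminal terms are nonnegative and, since $z_0^c=x_k$ and $\sum_c\omega_c=1$, the stage-cost lower bound of \cref{ass:stagecostproperty} gives $V^{\mathrm{ahm}}_{N_k}(x_k,\mathbf{d}^c)\geq\alpha_p(|x_k|)$, so $\alpha_1=\alpha_p$. For the upper bound \eqref{eq:ISpSupperbound} on $\Theta=\mathbb X_f$ I would first treat the nominal problem \eqref{eq:AHscenarioNLPnominalequiv}: for $x\in\mathbb X_f$ the candidate that applies the terminal law for the whole horizon is feasible by robust control invariance of $\mathbb X_f$ (\cref{ass:terminconditionsahmsnmpc}), and combining the bound on $\psi$ in \cref{ass:terminconditionsahmsnmpc} with \cref{lem:effectoflinoncosts} gives $V^{\mathrm{ahm}}_N(x,\mathbf{d}^0)\leq\alpha_{q,\psi}(|x|)+\alpha_v(|x|)$ uniformly in $N\in\mathcal N$. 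Passing from the nominal value to the true multi-scenario value with the Lipschitz estimate of \cref{thm:IFTtoKKT}, $|V^{\mathrm{ahm}}_N(x,\mathbf{d}^c)-V^{\mathrm{ahm}}_N(x,\mathbf{d}^0)|\leq L_v|\mathbf{d}^c-\mathbf{d}^0|$, and bounding $|\mathbf{d}^c-\mathbf{d}^0|$ by a constant multiple of the fixed quantity $|\Delta\mathbf{d}|$ from \eqref{eq:deltaddefinition}, yields \eqref{eq:ISpSupperbound} with $\alpha_2=\alpha_{q,\psi}+\alpha_v$ and $c_1$ proportional to $|\Delta\mathbf{d}|$.

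The core is the descent \eqref{eq:ISpSdescent}. Fix $x_k\in\Gamma$, solve \eqref{eq:AHscenarioNLPformulation} with horizon $N_k$, apply $u_k=\nu_0^\star$, and set $x_{k+1}=f(x_k,u_k,d_k)$, $x_{k+1}^0=f(x_k,u_k,d^0)$, $N_{k+1}=H(x_k,N_k,z^0_{k+1|k})$. I would split
\[
V^{\mathrm{ahm}}_{N_{k+1}}(x_{k+1},\mathbf{d}^c)-V^{\mathrm{ahm}}_{N_k}(x_k,\mathbf{d}^c)=\Delta_1+\Delta_2+\Delta_3,
\]
with $\Delta_1=V^{\mathrm{ahm}}_{N_{k+1}}(x_{k+1},\mathbf{d}^c)-V^{\mathrm{ahm}}_{N_{k+1}}(x_{k+1}^0,\mathbf{d}^0)$, $\Delta_2=V^{\mathrm{ahm}}_{N_{k+1}}(x_{k+1}^0,\mathbf{d}^0)-V^{\mathrm{ahm}}_{N_k}(x_k,\mathbf{d}^0)$, $\Delta_3=V^{\mathrm{ahm}}_{N_k}(x_k,\mathbf{d}^0)-V^{\mathrm{ahm}}_{N_k}(x_k,\mathbf{d}^c)$. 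By \cref{thm:IFTtoKKT} and Lipschitz continuity of $f$ (\cref{ass:basicstabilityassumptions}), $|x_{k+1}-x_{k+1}^0|\leq L_f|d_k-d^0|$, hence $\Delta_1\leq\sigma(|d_k-d^0|)+c'$ with $\sigma\in\mathcal K$ and $c'$ proportional to $|\Delta\mathbf{d}|$, while $\Delta_3\leq c''$ with $c''$ proportional to $|\Delta\mathbf{d}|$. For $\Delta_2$ I would use that \eqref{eq:AHscenarioNLPnominalequiv} is $\text{card}(\mathbb C)$ identical copies of the standard adaptive-horizon MPC, so the suboptimal candidate at $x_{k+1}^0$ reduces to the single-trajectory construction of \cite{griffith2018robustly}: when $N_{k+1}\geq N_k$, shift the optimal sequence by one step and append $N_{k+1}-N_k+1$ terminal-law steps --- feasible because $z^0_{N_k|k}\in\mathbb X_f$ which is invariant under the terminal law --- which after telescoping and \cref{lem:effectoflinoncosts} yields $\Delta_2\leq-\ell(x_k,\kappa(x_k),d^0)+\alpha_n(|z^0_{N_k|k}|)$; when $N_{k+1}<N_k$, truncate the shifted trajectory at stage $N_{k+1}$, whose terminal state lies in $\mathbb X_f$ because \cref{alg:NLPsensalg} chooses $N_{k+1}\geq N_T+N_\text{min}$ so that the one-step-ahead prediction --- and hence, up to the sensitivity error, the re-optimized trajectory --- has entered $\mathbb X_f$ before stage $N_{k+1}$, which similarly gives $\Delta_2\leq-\ell(x_k,\kappa(x_k),d^0)+\alpha_v(|z^0_{N_{k+1}+1|k}|)$. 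In either case \cref{ass:nominalahnmpcnleffects} (inequalities \eqref{eq:nominalahnmpceffects1}--\eqref{eq:nominalahnmpceffects2}) absorbs the linearization-error term against $\ell(x_k,\kappa(x_k),d^0)\geq\alpha_p(|x_k|)$, giving $\Delta_2\leq-\alpha_3(|x_k|)$. Summing the three contributions gives \eqref{eq:ISpSdescent} with $c_2=c'+c''$ proportional to $|\Delta\mathbf{d}|$; together with \eqref{eq:ISpSlowerbound}--\eqref{eq:ISpSupperbound} this makes $V^{\mathrm{ahm}}$ an ISpS Lyapunov function in $\Gamma$, so the closed loop is ISpS. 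Since $c_1,c_2\to0$ as $|\Delta\mathbf{d}|\to0$, the bound collapses to the nominal ISS estimate, as expected.

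The step I expect to be hardest is the descent in the horizon-shrinking case $N_{k+1}<N_k$: one must certify that the truncated shifted trajectory is genuinely feasible at $t_{k+1}$, which rests on controlling the gap between the sensitivity-predicted solution on which \cref{alg:NLPsensalg} bases $N_{k+1}$ and the true re-optimized solution, and one must bound the linearization error incurred on the spliced terminal segment --- exactly what \cref{lem:effectoflinoncosts} and \cref{ass:nominalahnmpcnleffects} are designed to handle. A secondary technical point is to check that the per-realization terminal ingredients ($\mathbb X_f^r$, the quadratic terminal cost built from the matrices $P_r$, and the terminal laws $h_f^r$ --- common only when the tree is fully branched) are mutually compatible with the single common terminal region $\mathbb X_f=\bigcap_{r}\mathbb X_f^r$ invoked throughout the argument.
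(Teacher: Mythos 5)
Your proposal is correct and follows essentially the same route as the paper: the same lower bound from \cref{ass:stagecostproperty}, the same terminal-cost upper bound on $\mathbb X_f$ transferred to the perturbed problem via the Lipschitz estimate of \cref{thm:IFTtoKKT} (giving $c_1\propto|\Delta\mathbf{d}|$), and the same two-case descent argument (shift-and-truncate for a shrinking horizon, shift-and-append-terminal-law for a growing one) closed by \cref{lem:effectoflinoncosts} and \cref{ass:nominalahnmpcnleffects}, with your $\Delta_1+\Delta_2+\Delta_3$ split being just a more explicit bookkeeping of the paper's nominal-vs-true correction terms that yield $c_2=2L_v|\Delta\mathbf{d}|$.
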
  
\begin{proof}
	To show the ISpS property of the adaptive horizon multi-stage MPC, a Lyapunov function must exist that satisfies the three conditions in \eqref{eq:ISpSLyapunovFunction}.
	
	\emph{Lower bound:} Suppose \cref{ass:stagecostproperty} holds then there exists a lower bound \eqref{eq:ISpSlowerbound} on the optimal value function.
	\begin{equation}
		V_{N}^\mathrm{ahm}(x_k, \mathbf{d}^c) \geq \ell(x_k, \kappa_N(x_k), d_0^0) \geq  \alpha_p(|x_k|) \quad \forall x_k \in \mathbb X
	\end{equation}
	
	\emph{Upper bound}: The existence of an upper bound \eqref{eq:ISpSupperbound} is shown by using \cref{ass:basicstabilityassumptions}\ref{itm:basicstabilityassumptions1}, the boundedness property of the optimal cost of standard MPC recursively forward \citep{rawlings2017model}, and \cref{ass:terminconditionsahmsnmpc}\ref{itm:terminconditionsahmsnmpc3} as follows:
	\begin{subequations}
		\begin{align}
			V_{N}^\mathrm{ahm}(x_k, \mathbf{d}^c) &\leq V_{N}^\mathrm{ahm}(x_k, \mathbf{d}^0) + L_v|\Delta \mathbf{d}| \\
			&\leq \psi(x_k, d^0) + L_v|\Delta \mathbf{d}| \\
			&\leq \alpha_{q,\psi}(|x_k|) + c_1
		\end{align}
	\end{subequations} 
    for all $x_k \in \mathbb X_f$, where $ c_1 = L_v|\Delta \mathbf{d}| $ is a constant. 
	It also applies to all $x_k \in \mathbb{X}$ by induction because $\mathbb X_f \subseteq \mathbb X$, and $V_N^\mathrm{ahm}(x_k, \mathbf{d}^0)$ is continuous at the origin.
 
    \color{\changes}
    Note that the adaptive horizon multi-stage MPC control law may be discontinuous because it optimizes with different prediction horizons provided by \cref{alg:NLPsensalg} at each time step.
    However, our analysis considers the properties of the optimal cost function $V_{N_k}^{ahm}$ which may still be continuous even with a discontinuous control law, given that the input constraint set is continuous. \cite{rawlings2017model} 
    \color{black}
	
 \emph{Function descent:} The descent property \eqref{eq:ISpSdescent}, which is the main part of stability analysis, is shown for two possible cases of horizon update:

	1. \emph{A decreasing prediction horizon $ N_R \leq N_{k+1} < N_{k} $:}
	An approximate solution for \eqref{eq:AHscenarioNLPnominalequiv} at time $ t_{k+1} $ is found by shifting the optimal solution at time $ t_k $ as follows:
	\begin{equation} \label{eq:shiftedsolution}
		\hat{\nu}_{i|k+1}^0 = \nu_{i+1|k}^0, \quad \; i = 0, \dots, N_{k+1}-1,
	\end{equation}
	and the corresponding initialization for state variables:
	\begin{subequations} \label{eq:stateinitialization}
		\begin{align} 
			& \hat{z}_{0|k+1}^0 = z_{1|k}^0, \\
			& \hat{z}_{i+1|k+1}^0 = f(\hat{z}_{i|k}^0, \hat{\nu}_{i|k}^0, d_i^0)
		\end{align}
	\end{subequations}
	leads to a suboptimal cost such that the descent inequality becomes:
	\begin{subequations} \label{eq:case1nominaldescent}
		\begin{align}
			\begin{split}
			&V_{N_{k+1}}^\mathrm{ahm}(f(x_k, \kappa(x_k), d_0^0), \mathbf{d}^0)  - V_{N_k}^\mathrm{ahm}(x_k, \mathbf{d}^0) \\
			& \leq \sum_{c\in\mathbb{C}} \omega_c \Big (\psi(\hat {z}_{N_{k+1}|k+1}^0, d_{N_{k+1}-1}^0) + \sum_{i=0}^{N_{k+1}-1} \ell(\hat{z}_{i|k+1}^0, \hat{\nu}_{i|k+1}^0, d_i^0) \Big)\\
			 & - \sum_{c\in\mathbb{C}} \omega_c \Big (\psi(\nu_{N_k|k}^0, d_{N_k-1}^0) - \sum_{i=0}^{N_k-1} \ell(z_{i|k}^0, \nu_{i|k}^0, d_i^0) \Big)
			 \end{split} \\
			 \begin{split}
			 & = - \ell(x_k, \kappa(x_k), d_0^0) + \psi(\hat{z}_{N_{k+1}|k+1}^0, d_{N_{k+1}-1}^0)  \\
			 &  + \sum_{i=0}^{N_{k+1}-1} \big( \ell(\hat{z}_{i|k+1}^0, \hat{\nu}_{i|k+1}^0, d_i^0) - \ell(z_{i+1|k}^0, \nu_{i+1|k}^0, d_i^0) \big) \\
			 & - \psi(z_{N_k|k}^0, d_{N_k-1}^0) - \sum_{i=N_{k+1}+1}^{N_k-1} \ell(\hat{z}_{i|k}^0, \hat{\nu}_{i|k}^0, d_i^0)
			 \end{split} \\
			 \begin{split}
			 & = - \ell(x_k, \kappa(x_k), d_0^0)  + \psi(\hat{z}_{N_{k+1}|k+1}^0, d_{N_{k+1}-1}^0) \\
			 & - \psi(z_{N_k|k}^0, d_{N_k-1}^0) - \sum_{i=N_{k+1}+1}^{N_k-1} \ell(\hat{z}_{i|k}^0, \hat{\nu}_{i|k}^0, d_i^0)
			 \end{split} \\
			 \begin{split}
			 & = - \ell(x_k, \kappa(x_k), d_0^0)  \\
			 & + \psi(\hat{z}_{N_{k+1}|k+1}^0, d_{N_{k+1}-1}^0)  - V_{N_k - N_{k+1} - 1}^\mathrm{ahm}( z_{N_{k+1}+ 1|k}^0, \mathbf{d}^0)
			 \end{split}
		\end{align}
	In the presence of true uncertainty, the successor state is $ x_{k+1} = f(x_k, \kappa(x_k), d_k) $ where $ d_k $ is the true parameter realization, such that:
	\begin{align} \label{eq:case1robustdescent}
		\begin{split}
		& V_{N_{k+1}}^\mathrm{ahm} (x_{k+1}, \mathbf{d}^0) - V_{N_k}^\mathrm{ahm}(x_k, \mathbf{d}^0) \leq  - \ell(x_k, \kappa(x_k), d_0^0) \\
		& + \psi(\hat{z}_{N_{k+1}|k+1}^0, d_{N_{k+1}-1}^0)  - V_{N_k - N_{k+1} - 1}^\mathrm{ahm}(z_{N_{k+1}+ 1|k}^0, \mathbf{d}^0) \\
		& + L_k|d_k - d_i^0|
		\end{split}	
	\end{align} 
	\end{subequations}
	Then from \cref{eq:AHscenarioNLPformulation,eq:AHscenarioNLPnominalequiv} we have:
     \begin{subequations}
     	\begin{align} 
    		V_{N_k}^\mathrm{ahm}(x_k, \mathbf{d}^c) & = V_{N_k}^\mathrm{ahm}(x_k, \mathbf{d}^0) + \mathcal O(|\Delta \mathbf{d}|) \label{eq:nominalrobusterrk}, \: \text{and} \\
            V_{N_{k+1}}^\mathrm{ahm}(x_{k+1}, \mathbf{d}^c) & = V_{N_{k+1}}^\mathrm{ahm}(x_{k+1}, \mathbf{d}^0) + \mathcal O(|\Delta \mathbf{d}|) \label{eq:nominalrobusterrk+1}
    	\end{align} 
    \end{subequations}
	that can be combined with \eqref{eq:case1robustdescent} to form:
	\begin{subequations}
		\begin{align}
			\begin{split}
				& V_{N_{k+1}}^\mathrm{ahm}(x_{k+1}, \mathbf{d}^c)  - V_{N_k}^\mathrm{ahm}(x_k, \mathbf{d}^c)	\leq - \ell(x_k, \kappa(x_k), d_0^0) \\
				& + \psi(\hat{z}_{N_{k+1}|k+1}^0, d_{N_{k+1}-1}^0)  - V_{N_k - N_{k+1} - 1}^\mathrm{ahm}(z_{N_{k+1}+ 1|k}^0, \mathbf{d}^0) \\
				& + L_k|d_k - d_i^0| + \Big(V_{N_{k+1}}^\mathrm{ahm}(x_{k+1}, \mathbf{d}^c) - V_{N_{k+1}}^\mathrm{ahm}(x_{k+1}, \mathbf{d}^0)\Big) \\
				& + \Big( V_{N_k}^\mathrm{ahm}(x_k, \mathbf{d}^c) - V_{N_k}^\mathrm{ahm}(x_k, \mathbf{d}^0) \Big)
			\end{split} \\
			\begin{split}
				& \leq - \ell(x_k, \kappa(x_k), d_0^0) + \psi(\hat{z}_{N_{k+1}|k+1}^0, d_{N_{k+1}-1}^0) \\  
				& - V_{N_k - N_{k+1} - 1}^\mathrm{ahm}(z_{N_{k+1}+ 1|k}^0, \mathbf{d}^0) + L_k|d_k - d_i^0| + 2L_v|\Delta \mathbf{d}|
			\end{split} \\
				& \leq - \alpha_p(|x_k|) + \alpha_v(|z_{N_{k+1}+1|k}^0|) + L_k|d_k - d_i^0| + 2L_v|\Delta \mathbf{d}| \label{eq:ISpSdescentproof1c} \\
			& \leq -\alpha_3(|x_k|) + \sigma(|d_k - d_i^0|) + c_2 \label{eq:ISpSdescentproof1d}
		\end{align}
	\end{subequations}
	where $ c_2 = 2L_v|\Delta \mathbf{d}| \geq 0 $ and \eqref{eq:ISpSdescentproof1c} follows from \cref{lem:effectoflinoncosts} and \eqref{eq:ISpSdescentproof1d} follows from \eqref{eq:nominalahnmpceffects2}.
	 
2. \emph{An increasing prediction horizon $ N_{k+1} \geq N_{k} $:}
	Similar to the first case approximate the solution of \eqref{eq:AHscenarioNLPnominalequiv} at $ t_{k+1} $ by shifting: 
	\begin{equation} \label{eq:shiftedsolutioncase2}
			\hat {\nu}_{i|k+1}^0 = 
			\begin{cases} 
				\nu_{i+1|k}^0 & \quad \forall \; i = 0, \dots, N_k-2 \\ 
				- K_0 z_{i}^0 & \quad \forall \; i = N_k-1, \dots, N_{k+1}-1
			\end{cases} \\
	\end{equation}
	The initialization of the states is the same as in \eqref{eq:stateinitialization}.
	Then the descent inequality becomes:
	\begin{subequations} \label{eq:case2nominaldescent}
		\begin{align}
			& V_{N_{k+1}}^\mathrm{ahm}(f(x_k, \kappa(x_k), d_0^0), \mathbf{d}^0)  - V_{N_k}^\mathrm{ahm}(x_k, \mathbf{d}^0) \\
			\begin{split}
			 & \leq \sum_{c\in\mathbb{C}} \omega_c \Big( \psi(\hat {z}_{N_{k+1}|k+1}^0, d_{N_{k+1}-1}^0) + \sum_{i=0}^{N_{k+1}-1} \ell(\hat{z}_{i|k+1}^0, \hat{\nu}_{i|k+1}^0, d_l^0) \Big) \\
			 & - \sum_{c\in\mathbb{C}} \omega_c \Big( \psi(z_{N_k|k}^0, d_{N_k-1}^0) - \sum_{i=0}^{N_k-1} \ell(z_{i|k}^0, \nu_{i|k}^0, d_i^0) \Big)
			 \end{split} \\
			 \begin{split}
			 & = - \ell(x_k, \kappa(x_k), d_0^0)  \\
			 & + \sum_{i=0}^{N_k-2} \big( \ell(\hat{z}_{i|k+1}^0, \hat{\nu}_{i|k+1}^0, d_i^0) - \ell(z_{i+1|k}^0, \nu_{i+1|k}^0, d_i^0) \big) \\
			 & + \psi(\hat {z}_{N_{k+1}|k+1}^0, d_{N_{k+1}-1}^0) + \sum_{i=N_k-1}^{N_{k+1}-1} \ell(\hat{z}_{i|k}^0, \hat{\nu}_{i|k}^0, d_i^0) \\
			 & - \psi(z_{N_k|k}^0, d_{N_k-1}^0)
			 \end{split} \\
			 \begin{split}
			 & = - \ell(x_k, \kappa(x_k), d_0^0) + \psi(\hat {z}_{N_{k+1}|k+1}^0, d_{N_{k+1}-1}^0)  \\
			 & + \sum_{i=N_k-1}^{N_{k+1}-1} \ell(\hat{z}_{i|k+1}^0, \hat{\nu}_{i|k+1}^0, d_i^0) - \psi(z_{N_k|k}^0, d_{N_k-1}^0)
			 \end{split} \\
			 \begin{split}
			 & = - \ell(x_k, \kappa(x_k), d_0^0) + V_{N_k - N_{k+1} + 1}(z_{N_k|k}^0, \mathbf{d}^0)  \\
			 & - \psi(z_{N_k|k}^0, d_{N_k-1}^0)
			 \end{split}
		\end{align}
	In the presence of true parameter realization $ d_k $ then:
		\begin{align} \label{eq:case2robustdescent}
		\begin{split}
			& V_{N_{k+1}}^\mathrm{ahm} (x_{k+1}, \mathbf{d}^0) - V_{N_k}^\mathrm{ahm}(x_k, \mathbf{d}^0) \leq  - \ell(x_k, \kappa(x_k), d_0^0) \\
			& + V^{lqr}_{N_{k+1} - N_k + 1}(z_{N_k|k}^0, \mathbf{d}^0)  - \psi(z_{N_k|k}^0, d_{N_k-1}^0) \\
			& + L_k|d_k - d_i^0|
		\end{split}	
		\end{align}
	\end{subequations}
	Substituting \eqref{eq:nominalrobusterrk} and \eqref{eq:nominalrobusterrk+1} on \eqref{eq:case2robustdescent} we have:
	\begin{subequations}
	\begin{align}
		\begin{split}
			& V_{N_{k+1}}^\mathrm{ahm} (x_{k+1}, \mathbf{d}^c)  - V_{N_k}^\mathrm{ahm}(x_k, \mathbf{d}^c)	\leq - \ell(x_k, \kappa(x_k), d_0^0) \\
			& + V^{lqr}_{N_{k+1} - N_k + 1}(z_{N_k|k}^0, \mathbf{d}^0)  - \psi(z_{N_k|k}^0, d_{N_k-1}^0) \\
			& + L_k|d_k - d_i^0| +  2L_v|\Delta \mathbf{d}| \\
		\end{split}\\
		\begin{split}
			& \leq - \ell(x_k, \kappa(x_k), d_0^0) + V^{lqr}_{N_{k+1} - N_k + 1}(z_{N_k|k}^0, \mathbf{d}^0) \\  
			&- \psi(z_{N_k|k}^0, d_{N_k-1}^0) + L_k|d_k - d_i^0| + 2L_v|\Delta \mathbf{d}| 
		\end{split}\\
			& \leq - \alpha_p(|x_k|) + \alpha_n(|z_{N_{k|k}}^0|) + L_k|d_k - d_i^0| + 2L_v|\Delta \mathbf{d}|  \label{eq:ISpSdescentproof2c}\\
			& \leq -\alpha_3(|x_k|) + \sigma(|d_k - d_i^0|) + c_2 \label{eq:ISpSdescentproof2d}
	\end{align}
	\end{subequations}
	where $ c_2 = 2L_v|\Delta \mathbf{d}| \geq 0 $, and \eqref{eq:ISpSdescentproof2c} follows from \cref{lem:effectoflinoncosts} and \eqref{eq:ISpSdescentproof2d} follows from \eqref{eq:nominalahnmpceffects1}.
	
Thus $ V_{N_k}^\mathrm{ahm} $ satisfies the descent property \eqref{eq:ISpSdescent} for all the possible horizon length updates. 
Hence the adaptive horizon multi-stage MPC is ISpS stable.
\end{proof}

\section{Numerical examples} \label{sec:casestudies}
\color{\changes}
\subsection{Example 1 - Spring damped mass system}
This is a simple example adapted from \cite{raimondo2009min} used to demonstrate the performance of an adaptive horizon multi-stage MPC with a fully branched scenario tree, and therefore suitable terminal conditions that guarantee recursive feasibility and ISpS stability (see \cref{sec:recursivefeasibilityfullybranched}).
The system model is written as:
\begin{equation*}
    \dot x_1 = x_2, \quad \dot x_2 = -\frac{k_0}{m} {\rm e}^{-x_1} x_1 - \frac{h_d}{m} x_2 + \frac{u}{m}
\end{equation*}
where the state variable vector $x = [x_1, \: x_2]^\top$ includes the displacement and the velocity of the mass $m = 1$ $\kilogram$, respectively.
$k = k_0{\rm e}^{-x_1}$ is the elastic constant of the spring and $k_0 = 0.33$ N$\per\meter$.
The damping factor $h_d$ is uncertain and can have three different possible values $h_d^r \: = \: \{1.0,2.0,4.0\}$ N$\second\per\meter$ with equal probabilities.
The damping factor is assumed to vary unpredictably between sampling intervals of $\Delta t$ = 0.4 \second.

The objective is to control the mass to its equilibrium position $x_1 = 0$ using an external force $u$.
The stage cost is given by $\ell = x^\top Q x + u^\top R u$ and $h_d$ varies randomly at each time step with the previously defined probabilities.

\subsection{Approximation of terminal ingredients}
The LQR method explained in \cref{sec:terminalconditions} is applied to find suitable $\psi$ and $\mathbb X_f$.
The tuning matrices used are $Q = \text{diag}([30, \; 20])$ and $R=1$.
The terminal control law must be common for all scenarios due to full branching (see \cref{sec:terminalingredientsfullybranched}).
Therefore, $K = [1.7409 \; 2.0959]$ is chosen with $\kappa_f(x) = -Kx$ for all $x \in \mathbb X_f$.
Different values of $P_r$ are computed using the Lyapunov equation to obtain the respective terminal costs for each realization.
\begin{figure}%%[htbp]
	\centering
	\includegraphics[scale=0.8]{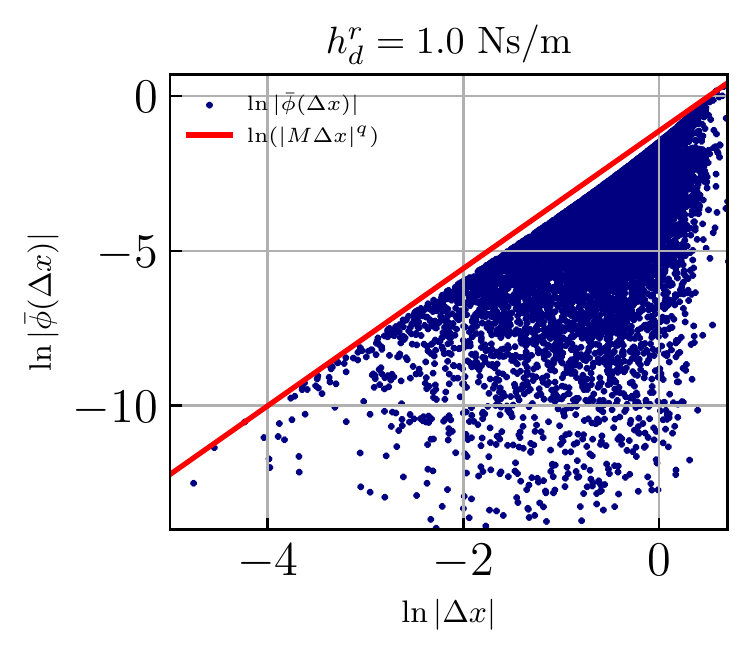}
	\caption{\color{\changes}Spring-damper-mass - plots of linearization error against $ |\Delta x| $ for $ 10000 $ simulations showing the bound (red line).}
	\label{fig:sdmlinerrplt}
\end{figure}
The terminal radii are determined by performing $10^5$ one-step simulations of the closed-loop system and estimating the linearization error bound.
The linearization error of this system is independent of $h_d$, so the simulations were performed for only one of the disturbance realizations.
After obtaining bound parameters $M_r = 0.3235$ and $q_r = 2.2176$, the terminal radii were found to be $c_f^r \approx \{0.8032, \:
 0.8922, \:
 0.6690\}$.
 \subsection{Simulation results}
The simulations are done using \texttt{JuMP v.0.21.10} \citep{DunningHuchetteLubin2017} as the NLP modeler in a \texttt{Julia} \citep{doi:10.1137/141000671} environment. 
The NLP solver used is \texttt{IPOPT 3.13.4} \citep{wachter2006implementation}, and the linear solver is \texttt{HSL-MA57} \citep{hsl2007collection} on a 2.6 GHz Intel Core-i7 with 16 GB memory.

An initial prediction horizon of $N_0 = 8$ is chosen with $N_\text{min} = 2$.
The scenario tree is fully branched at each MPC iteration making a total of 6561 scenarios at the first iteration.
\cref{fig:sdmresults} shows two sets of closed-loop simulations of the adaptive horizon multi-stage MPC that are done with initial conditions $x_0 = [-4,\: 4]^\top$ $\meter$ (blue line) and $ [5.3,\: 2]^\top$ $\meter$ (black line).
The state bounds are $x_1 \in [-10,\: 10]$ $ \meter$  and $x_2 \in [-4,\:10]$ \meter\per\second and the control input bounds are $u \leq |6.0 \: N|$.
 \begin{figure}%[htbp]
	\centering
	\includegraphics[scale=1.0]{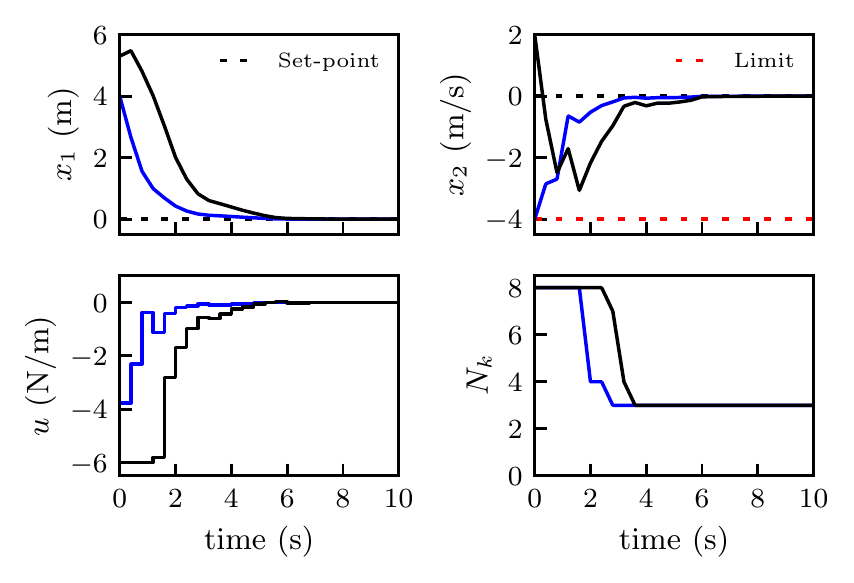}
	\caption{\color{\changes}Spring-damper-mass - simulation results from two initial conditions $x_0 = [-4,\: 4]$ (blue lines) and $ x_0 = [5.3,\: 2]^\top$ (black lines) showing the control performance of a fully branched adaptive horizon multi-stage MPC.}
	\label{fig:sdmresults}
\end{figure}
The system is controlled to its setpoint and the horizon length $N_k$ is reduced as the setpoint is approached.
Eventually, the $N_k$ is reduced to 3 from 8 cutting the number of scenarios from 6561 to 27.
Therefore, this example also demonstrates the advantage of the adaptive horizon method for computational cost savings in multi-stage MPC with a fully branched scenario tree.

\color{black}
\subsection{Example 2 - Cooled CSTR}
\textcolor{\changes}{The rest of the numerical examples consider a robust horizon (i.e. no full branching), therefore recursive feasibility is not necessarily guaranteed (see \cref{sec:recursivefeasibilityrobusthorizon}).
However, the aim is to compare the performances of the adaptive horizon multi-stage MPC (varying horizon lengths) with fixed horizon multi-stage MPC and to demonstrate the computational cost savings as a result of the adaptive horizon algorithm.}

Consider the control of a CSTR with a cooling jacket example from \cite{klatt1998gain}. 
The dynamics of the cooled CSTR are given by,
\begin{align*}
    \dot{c_A} & = F(c_{A,0} - c_A) - k_1c_A - k_3c_A^2 \\
    \dot{c_B} & = -Fc_B + k_1c_A - k_2c_B \\
    \begin{split}
    \dot{T_R} & = F(T_{\text{in}} - T_R) + \frac{k_WA_R}{\rho c_p V_R}(T_J - T_R)\\
    & \qquad - \frac{k_1c_A\Delta H_{AB} + k_2c_B\Delta H_{BC} + k_3c_A^2\Delta H_{AD}}{\rho c_p} 		
    \end{split}\\
    \dot{T_J} & = \frac{1}{m_Jc_{p,J}}(\dot{Q_J} + k_WA_R(T_R - T_J))	  
\end{align*}
where reaction rates $ k_i $ follow the Arrhenius law, $ k_i = A_i\exp{\Big(\frac{-E_{i}}{RT_R}\Big)} $.
\begin{table}%[htbp]
	\caption{CSTR - Bounds on states and inputs}
	\centering
	\begin{tabular}{|c|c|c|c|c|}
		\hline
		Variable & Initial condition & Minimum & Maximum & Unit \\
		\hline
		$c_A$ & $0.8$ & $0.1$ & $5.0$ & $\mole\per\litre$ \\
		$c_B$ & $0.5$ & $0.1$ & $5.9$ & $\mole\per\litre$ \\ 
		$T_R$ & $134.14$ & $50$ & $140$ & $\degreecelsius$ \\ 
		$T_J$ & $134.0$ & $50$ & $180$ & $\degreecelsius$ \\ 
		$F$ & $18.83$ &$0.0$ & $35$ & $\per\hour$ \\
		$\dot{Q_J}$& $-4495.7$ & $-8500$ & $0$ & $\kilo\joule\per\hour$ \\
		\hline
	\end{tabular}
	\label{tab:cooledcstrbounds}
\end{table}

The state variable vector $ x = [c_A, \, c_B,\, T_R,\, T_J]^\top $ consists of the concentrations of $ A $ and $ B $, reactor and coolant temperatures, respectively. 
The control inputs $ u $, are inlet flow per reactor volume $ F = V_\text{in}/V_R $, and cooling rate $ \dot{Q_J} $.

The control objective is to regulate $ c_B $ at a desired setpoint. 
The system operates at two setpoints: $ c_B^\text{set} = 0.5 \, \mole\per\ell $, and $ c_B^\text{set} = 0.7 \, \mole\per\ell $.
The activation energy $ E_{3} =  8560 R \pm 2.5\% $ $\kelvin$ is the only uncertain parameter in the system.
The stage cost is a setpoint tracking squared error of $ c_B $ plus control movement penalization terms $ \Delta F_i = F_{i}-F_{i-1} $ and $ \Delta \dot{Q_J}_k = \dot{Q_J}_{i}-\dot{Q_J}_{i-1} $ given by $ \ell_k = ({c_B}_i - c_B^\text{set})^2 + {r_\Delta}_1\Delta F_i^2 + {r_\Delta}_2\Delta \dot{Q_J}_i^2 $.
where the control penalties are $ {r_\Delta}_1 = 10^{-5} $ and $ {r_\Delta}_2 = 10^{-7} $.
Regularization terms are added to impose implicit references on the remaining states and strong convexity \citep{jaschke2014fast}. 

The initial prediction horizon is $ N_0  = 40 $. 
Terminal constraints are included for all the scenarios.
The simulations are performed for $ N_R = 1,\, 2 $.
We sample three parameter realizations, such that when $N_R = 2$ there are 9 scenarios as in \cref{fig:robusthorizonscenariotree}.

\subsubsection{Offline approximation of terminal ingredientd}
To design a stabilizing LQR for each  parametric realization, we use $ Q = I_4 $, and $ R = \text{diag}([10^{-3} \, 10^{-4}]) $. 
As outlined before, $ 10^5 $ one-step simulations are performed offline from random initial states to compute linearization error for each parametric realization and each operating point.
\cref{fig:cstrlinerrplt} shows the plots of the linearization error against $ |\Delta x| $ for each parametric realization and each setpoint.
\begin{figure*}%[htbp]
	\centering
	\includegraphics[width=0.9\textwidth]{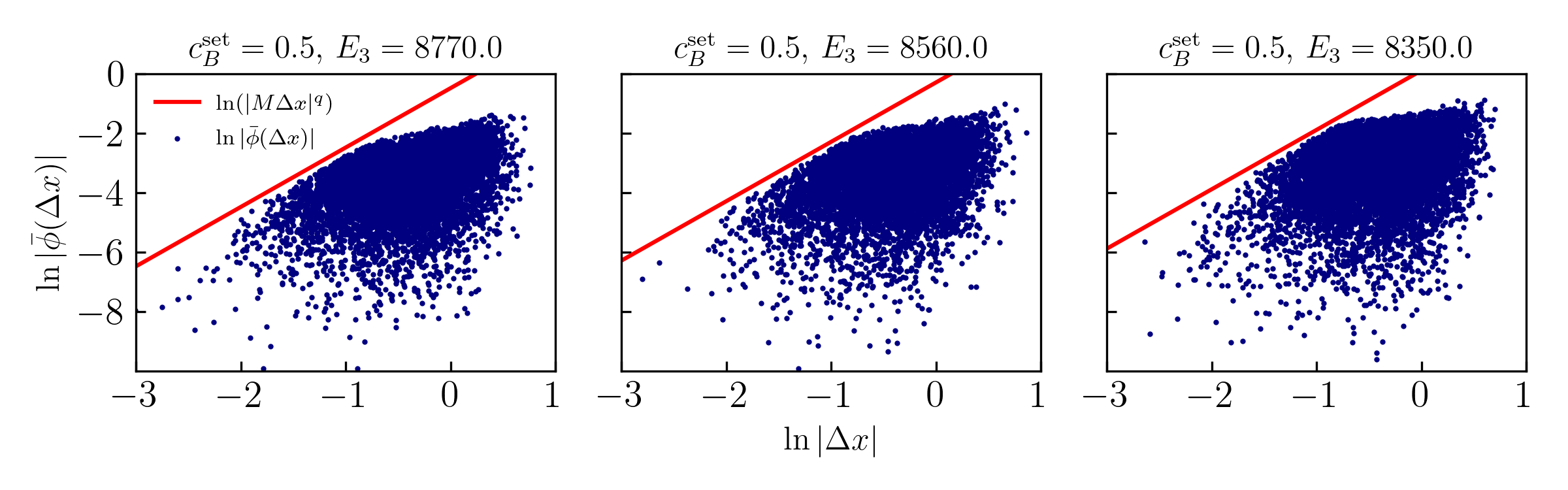}
	\includegraphics[width=0.9\textwidth]{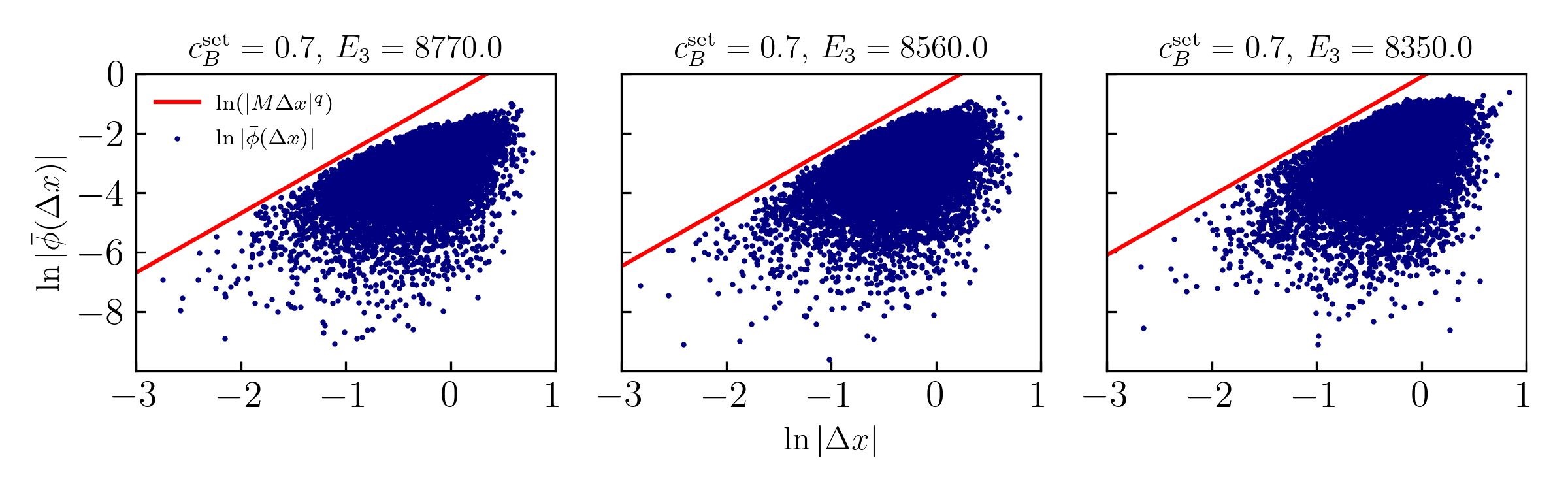}
	\caption{CSTR - plots of linearization error against $ |\Delta x| $ for $ 10000 $ simulations for each parametric realization of $E_3$, at each setpoint $c_B^\text{set} = 0.5$ mol/$\ell$ and 0.7 mol/$\ell$ showing their bounds (red lines).}
	\label{fig:cstrlinerrplt}
\end{figure*}
All the plots show an upper bound on the linearization error (red solid lines in \cref{fig:cstrlinerrplt}) from which $M_r$ and $q_r$ values are fitted.
The terminal region radii for each parametric realization are obtained by evaluating \eqref{eq:cf} as shown in \cref{tab:cstrlinerrorbounds}.
\begin{table}%[htbp]
	\caption{CSTR - linearization error bounds and terminal radii}
	\centering
	\begin{tabular}{|c|c|c|c|c|c|c|}
		\hline
		 \multirow{3}{*}{$ E_3 $ (\kelvin)} & \multicolumn{3}{c}{$ c_B^\text{set} = 0.5$ mol/$ \ell $}  & \multicolumn{3}{|c|}{$ c_B^\text{set} = 0.7$ mol/$ \ell $} \\
		\cline{2-7}
		& $ M_r $ & $ q_r $ & $ c_f^r $ & $ M_r $ & $ q_r $ & $ c_f^r $ \\ 
		\hline
		$8774$ & $0.62$ & $2.0$ & $0.1429$ & $0.50$ & $ 2.0 $ & $ 0.1718$ \\
		$8560$ & $0.75$ & $2.0$ & $0.1159$ & $0.62$ & $ 2.0 $ & $ 0.1337$ \\ 
		$8346$ & $1.12$ & $2.0$ & $0.0747$ & $0.90$ & $ 2.0 $ & $ 0.0846$ \\
		\hline
	\end{tabular}
	\label{tab:cstrlinerrorbounds}
\end{table}

\subsubsection{Simulation results}
A sample time of $ 18 \, \second $ is used, and the process is simulated for $ 0.6 \, \hour $.
At the beginning $ c_B^\text{set} = 0.5 \, \mole\per\ell $ and it changes to $ c_B^\text{set} = 0.7 \, \mole\per\ell $ at $ t_k = 0.3 \, \hour $.   
%The inlet stream concentration of component $ A $ is assumed constant at $ c_{A,0} = 5.1 \, \mole\per\ell$ and 
The uncertain parameter $ E_{3} $ is a random sequence of the sampled realizations.
The control performance of the adaptive horizon multi-stage MPC is compared with the fixed horizon multi-stage MPC in this system. 
The value of $ N_\text{min} = 5 $ in the adaptive horizon multi-stage algorithm.
\cref{fig:statesandinputs1,fig:statesandinputs2} show closed-loop simulation results when $N_R = 1, \, 2$, respectively.
\begin{figure}%[htbp]
	\centering
	\includegraphics[scale=1.0]{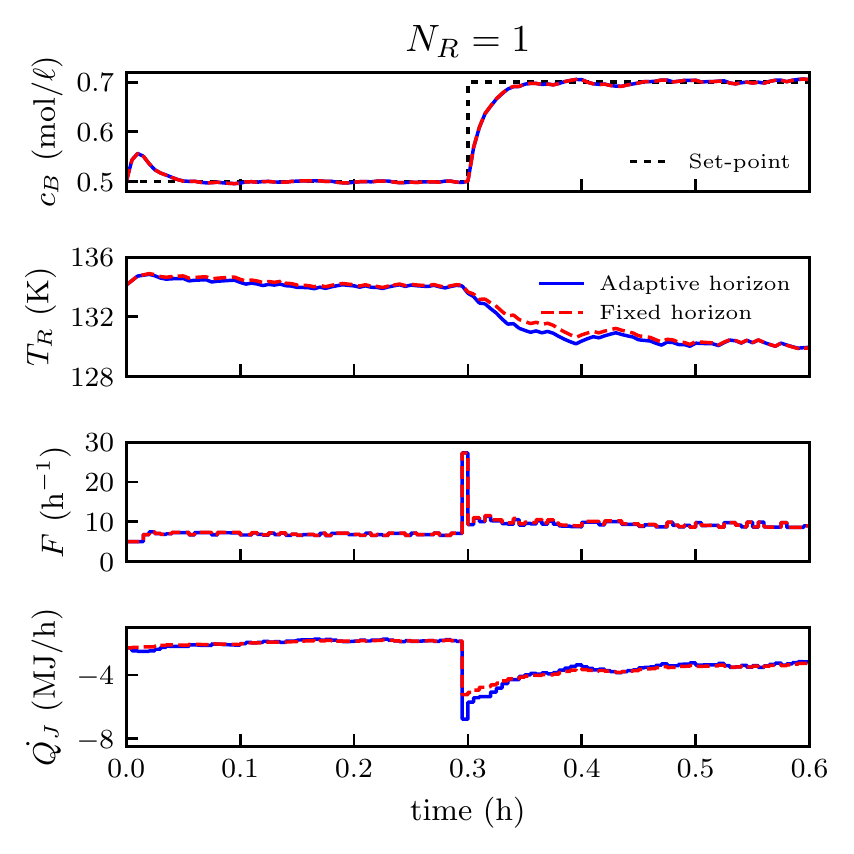}
	\caption{CSTR - Simulation results comparing the adaptive horizon multi-stage MPC with the fixed horizon multi-stage MPC when robust horizon $N_R=1$.} %and a random uncertain parameter sequence.}
	\label{fig:statesandinputs1}
\end{figure}
\begin{figure}%[htbp]
	\centering
	\includegraphics[scale=1.0]{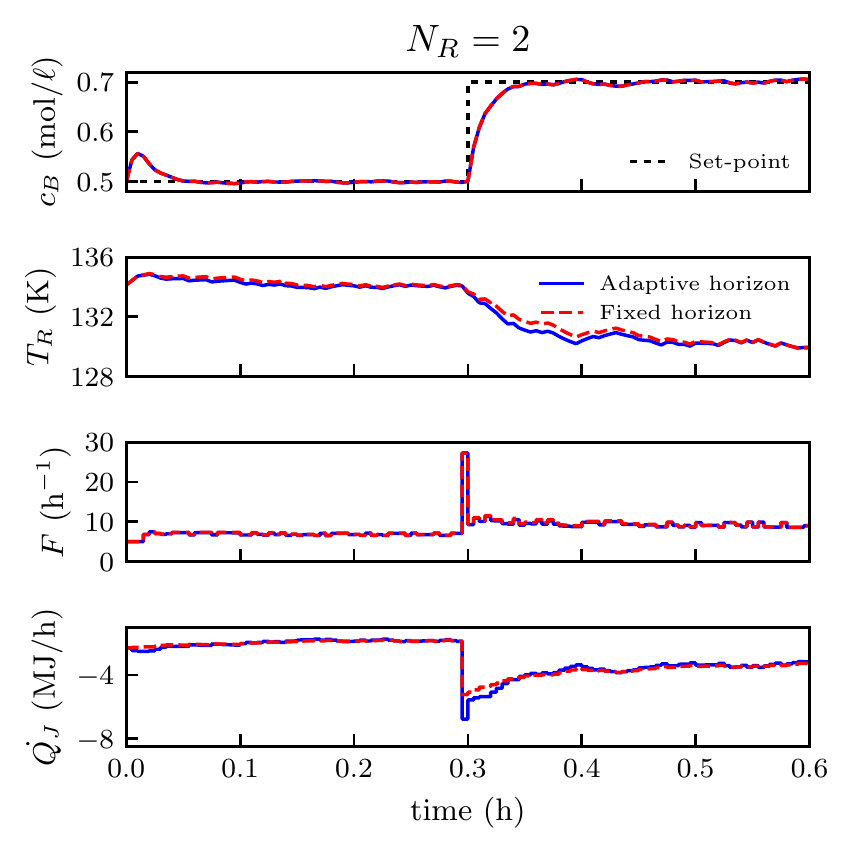}
	\caption{CSTR - Simulation results comparing the adaptive horizon multi-stage MPC with the fixed horizon multi-stage MPC when robust horizon $N_R=2$.}
	\label{fig:statesandinputs2}
\end{figure}
The optimal input sequences and state trajectories for both controllers are nearly identical.
The two controllers show similar performance in tracking changes in setpoint $ c_B^\text{set} $.
This implies that the adaptive horizon algorithm does not affect the tracking performance of the multi-stage MPC.

The prediction horizon and total computation times per iteration for the two controllers when $ N_R = 1,\,2$ are plotted in \cref{fig:computationtime1,fig:computationtime2}, respectively.
\begin{figure}%[htbp]
	\centering
	\includegraphics[scale=1.0]{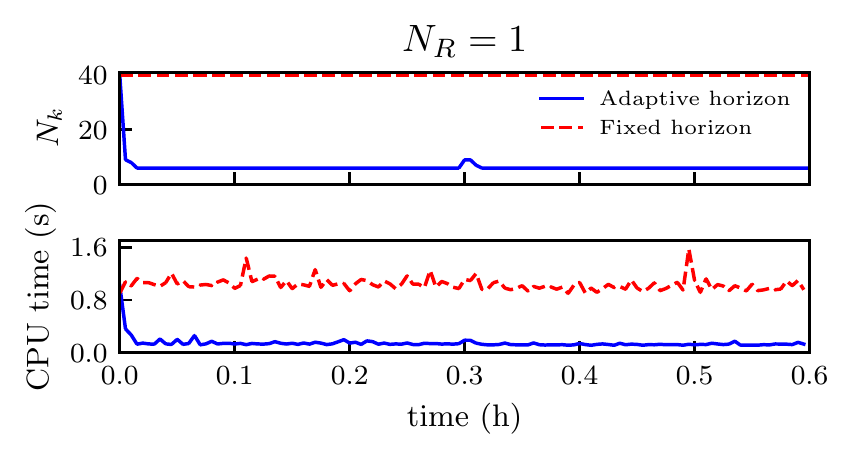}
	\caption{CSTR - Simulation results comparing prediction horizons and computation times per iteration when robust horizon $ N_R = 1 $.}
	\label{fig:computationtime1}
\end{figure}
\begin{figure}%[htbp]
	\centering
	\includegraphics[scale=1.0]{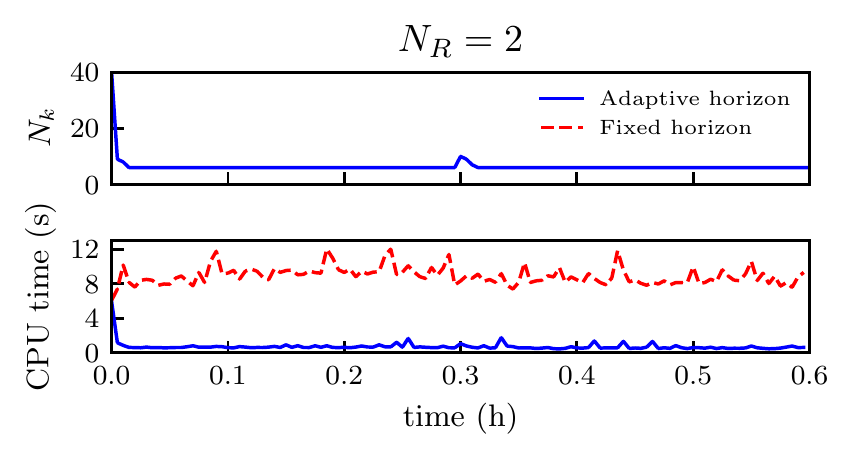}
	\caption{CSTR - Simulation results comparing prediction horizons and computation times per iteration when robust horizon $ N_R = 2 $.}
	\label{fig:computationtime2}
\end{figure}
The adaptive horizon multi-stage MPC reduces the prediction horizon significantly, thus saving the computation time needed in each multi-stage MPC iteration.
There is an $ 86\% $ savings in CPU time on average from fixed horizon multi-stage MPC with $ N_R=1 $ by using the adaptive horizon multi-stage MPC.
When $ N_R=2 $, there is also a $ 92\% $ savings in CPU time on average.
The regulatory performances of the two controllers are nearly identical but the prediction horizon and computation time are reduced significantly for the adaptive horizon multi-stage MPC.

\subsection{Example 3 - Quad-tank system}
The second example is from \cite{raff2006nonlinear}, on the control of a quad-tank system with four tanks. 
The liquid levels in the four tanks are described by the following set of differential equations:
\begin{align*}
    \dot{x_1} &= -\frac{a_1}{A_1}\sqrt{2gx_1} + \frac{a_3}{A_1}\sqrt{2gx_3} + \frac{\gamma_1}{A_1}u_1 \\
    \dot{x_2} &= -\frac{a_2}{A_2}\sqrt{2gx_2} + \frac{a_4}{A_2}\sqrt{2gx_4} + \frac{\gamma_2}{A_2}u_2 \\
    \dot{x_3} &= -\frac{a_3}{A_3}\sqrt{2gx_3} + \frac{1-\gamma_2}{A_3}u_2 \\
    \dot{x_4} &= -\frac{a_4}{A_4}\sqrt{2gx_4} + \frac{1-\gamma_1}{A_4}u_1 
\end{align*}
where $x_i$ represents the liquid level in tank $i$, $u_i$ is the flow rate of pump $i$.
The system has four states: $ x = [x_1, \, x_2, \, x_3,\, x_4]^\top$ and two control inputs: $ u = [u_1,\, u_2]^\top $.
$ A_i $ and $ a_i $ are the cross sectional areas of the tank $i$ and its outlet, respectively.
The valve coefficients $\gamma_1$ and $\gamma_2$ are the uncertain parameters of this system.
The values of $\gamma_1, \gamma_2 = 0.4 \pm 0.05$ and the system bounds are shown in \cref{tab:quadtankbounds}.
%\begin{table}%[htbp]
%	\caption{Quadtank - model parameters}
%	\centering
%		\begin{tabular}{|c|c|c||c|c|c|}
%			\hline
%			Parameter & Value & Unit & Parameter & Value & Unit \\
%			\hline
%			$A_1$ & $50.27$ & $\centi\meter\squared$ & $a_1$ & $0.233$ & $\centi\meter\squared$ \\
%			$A_2$ & $50.27$ & $\centi\meter\squared$ & $a_2$ & $0.242$ & $\centi\meter\squared$ \\ 
%			$A_3$ & $28.27$ & $\centi\meter\squared$ & $a_3$ & $0.127$ & $\centi\meter\squared$ \\ 
%			$A_4$ & $28.27$ & $\centi\meter\squared$ & $a_4$ & $0.127$ & $\centi\meter\squared$ \\ 
%			$\gamma_1$ & $0.4 \pm 0.05$ & - & $\gamma_2$ & $0.4 \pm 0.05$ & - \\
%			\hline
%		\end{tabular}
%	\label{tab:quadtankparameters}
%\end{table}
\begin{table}%[htbp]
	\caption{Quad-tank - Bounds on states and inputs}
	\centering
	\begin{tabular}{|c|c|c|c|}
		\hline
		Variable & Minimum & Maximum & Unit \\
		\hline
		$x_1$ & $7.5$ & $28.0$ & $\centi\meter$ \\
		$x_2$ & $7.5$ & $28.0$ & $\centi\meter$ \\ 
		$x_3$ & $14.2$ & $28.0$ & $\centi\meter$ \\ 
		$x_4$ & $4.5$ & $21.3$ & $\centi\meter$ \\ 
		$u_1$ & $0.0$ & $60.0$ & $\milli\ell\per\second$ \\
		$u_2$ & $0.0$ & $60.0$ & $\milli\ell\per\second$ \\
		\hline
	\end{tabular}
	\label{tab:quadtankbounds}
\end{table}

The controller has a reference tracking objective, regulating the two levels in the lower tanks (tanks 1 and 2).
The setpoints are $x_1^\text{set} = x_2^\text{set} = 14 \, \centi\meter$.
We introduce predefined pulse changes in the state values at specific iterations to reset reference tracking as shown in \cref{tab:quadtankpulse}.
\begin{table}%[htbp]
	\caption{Pulse changes to state variables applied to the quad-tank system}
	\centering
	\begin{tabular}{|c|c|c|c|c|}
		\hline
		$k$ & $x_1$ & $x_2$ & $x_3$ & $x_4$ \\
		\hline
		$0$ & 28 $\centi\meter$ & 28 $\centi\meter$ & 14.2 $\centi\meter$ & 21.3 $\centi\meter$\\
		$50$ & 28 $\centi\meter$ & 14 $\centi\meter$ & 28 $\centi\meter$ & 21.3 $\centi\meter$\\ 
		$100$ & 28 $\centi\meter$ & 14 $\centi\meter$ & 14.2 $\centi\meter$ & 21.3 $\centi\meter$\\ 
		\hline
	\end{tabular}
	\label{tab:quadtankpulse}
\end{table}

The closed-loop multi-stage MPC simulations are run for 150 iterations with a sample time of $10 \, \second$.
The stage cost function is given by: $\ell_k = ({x_1}_i - x_1^\text{set})^2 + ({x_2}_i - x_2^\text{set})^2 + r_{\Delta}(\Delta {u_1}_i^2 + \Delta {u_2}_i^2)$,
%\begin{equation}
%	\label{eq:stagecostquadtank}
%	\ell_k = ({x_1}_i - x_1^\text{set})^2 + ({x_2}_i - x_2^\text{set})^2 + r_{\Delta}(\Delta {u_1}_i^2 + \Delta {u_2}_i^2)
%\end{equation}
where $ \Delta {u_1}_i = {u_1}_i - {u_1}_{i-1} $ and $ \Delta {u_2}_i = {u_2}_i - {u_2}_{i-1} $ are the control movement terms for the pump flow-rates that are penalized in the objective function with the penalty parameter $ r_{\Delta} = 0.01 $.

\subsubsection{Offline approximation of terminal ingredients}
LQR controller tuning $ Q = 1.5I_4 $ and $ R = I_2 $ are selected for the stabilizing controller for each parameter realization of $ \gamma_1 $ and $ \gamma_2 $. 
As done previously, $10^5$ one-step offline simulations from randomly sampled initial state values are performed for each parameter realization to compute the linearization error.
The set $\mathbb{D}$ is obtained via grid-based sampling including the nominal and extreme parameter values only.
A total of 9 parameter realizations are considered for the two uncertain parameters.
\begin{figure*}%[htbp]
	\centering
	\includegraphics[width=0.9\textwidth]{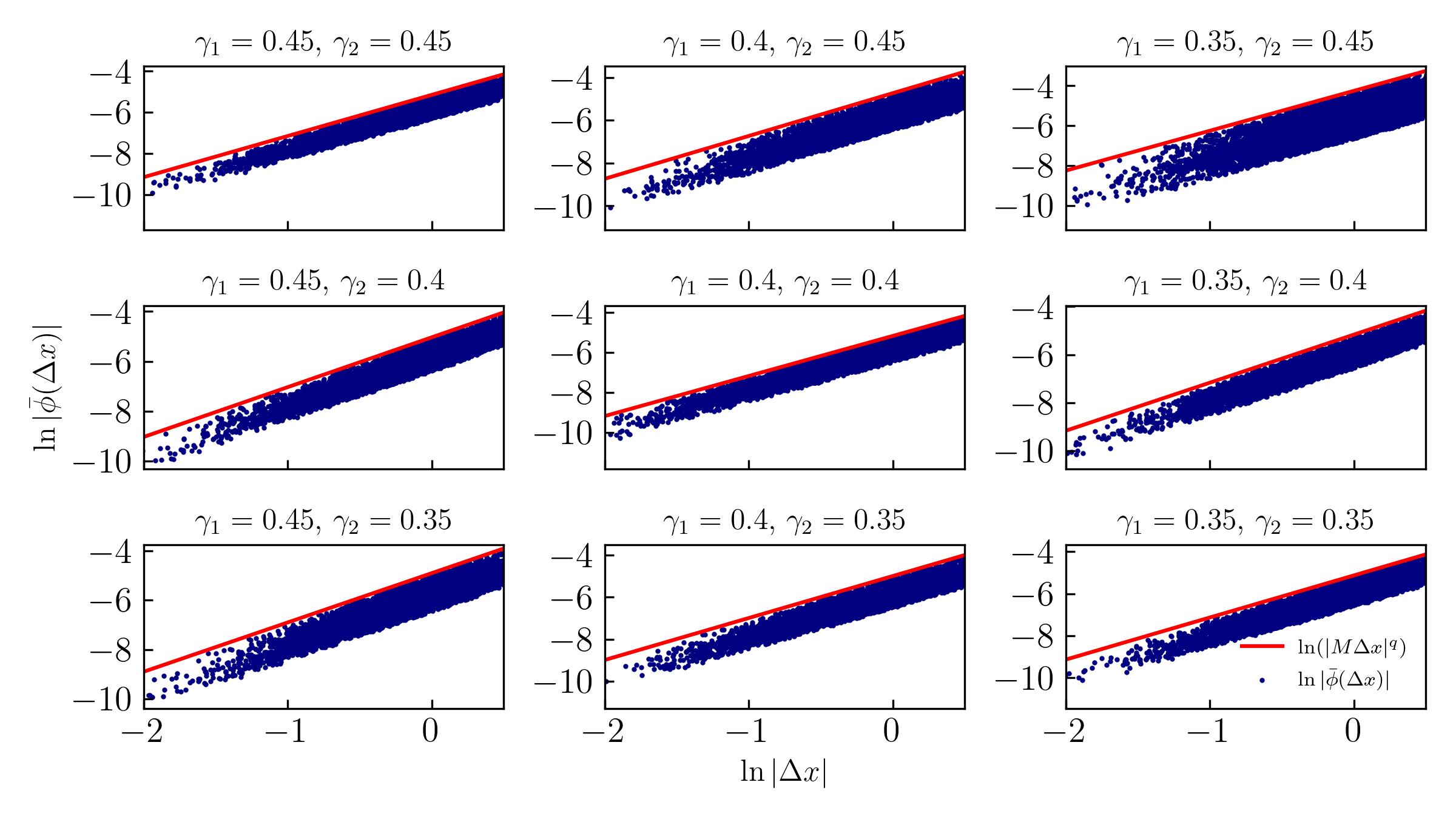}
	\caption{Quad-tank - matrix of plots of linearization error against $ |\Delta x| $ of $ 10,000 $ simulations for every uncertain parameter realization.}
	\label{fig:quadtanklinerrplt}
\end{figure*}
\cref{fig:quadtanklinerrplt} is a matrix of linearization error plots from the one-step simulations with estimated upper bounds (red lines) for each uncertain parameter realization.
The terminal radii for each parameter realization are computed using \eqref{eq:cf}.
The estimated linearization error bounds and terminal radii for each parameter realization are shown in \cref{tab:quadtanklinerrorbounds}.
\begin{table}%[htbp]
	\caption{Quad-tank - linearization error bounds and terminal radii}
	\centering
	\begin{tabular}{|c|c|c|c|}
		\hline
	 	$[\gamma_1; \, \gamma_2 ]$& $ M_r $ & $ q_r $ & $ c_f^r $ \\ 
		\hline
		$ [0.45; \, 0.45] $ & $0.0058$ & $2.0$ & $30.43$ \\
		$ [0.40; \, 0.45] $ & $0.0088$ & $2.0$ & $19.99$ \\ 
		$ [0.35; \, 0.45] $ & $0.0142$ & $2.0$ & $12.12$ \\
		$ [0.45; \, 0.40] $ & $0.0065$ & $2.0$ & $27.04$ \\
		$ [0.40; \, 0.40] $ & $0.0056$ & $2.0$ & $31.77$ \\
		$ [0.35; \, 0.40] $ & $0.0057$ & $2.0$ & $31.57$ \\
		$ [0.45; \, 0.35] $ & $0.0074$ & $2.0$ & $23.69$ \\
		$ [0.40; \, 0.35] $ & $0.0068$ & $2.0$ & $26.63$ \\
		$ [0.35; \, 0.35] $ & $0.0059$ & $2.0$ & $30.97$ \\
		\hline
	\end{tabular}
	\label{tab:quadtanklinerrorbounds}
\end{table}

\subsubsection{Simulation results}
The parameters $\gamma_1$ and $\gamma_2$ have an uncertainty range of $\pm0.05$ about their nominal value.
Similar to Example 2, the simulations are performed with a parameter sequence randomly sampled from \{max, nom, min\} values.
Again, there are two simulation sets comparing the control performances of the adaptive horizon and fixed horizon multi-stage MPC when $ N_R = 1,\,2$.
Simulation results of the quad-tank system for the two controllers when $ N_R = 1,\,2 $ are plotted in \cref{fig:quadtankstatesandinputs1,fig:quadtankstatesandinputs2}, respectively.
\begin{figure}%[htbp]
	\centering
	\includegraphics[scale=1.0]{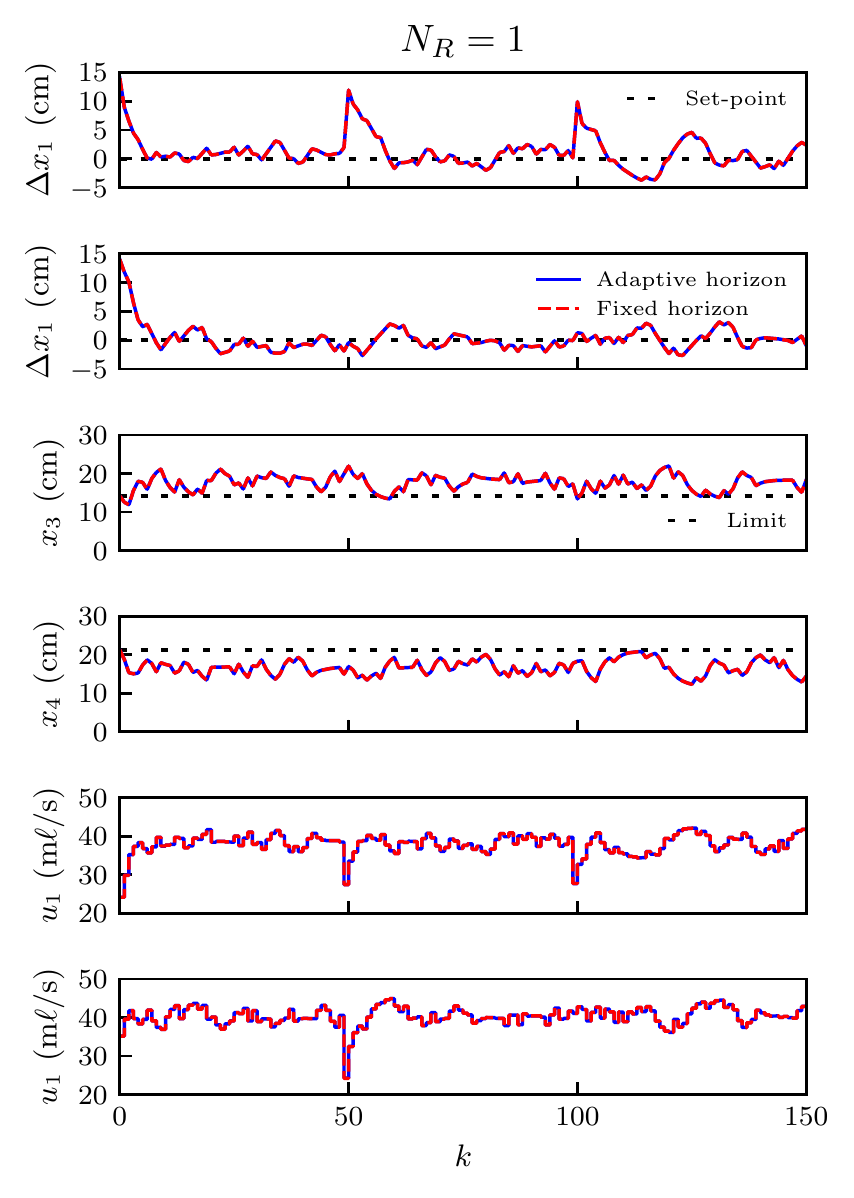}
	\caption{Quad-tank - Simulation results comparing fixed horizon multi-stage and adaptive horizon multi-stage MPC with $ N_R=1 $.}
	\label{fig:quadtankstatesandinputs1}
\end{figure}
\begin{figure}%[htbp]
	\centering
	\includegraphics[scale=1.0]{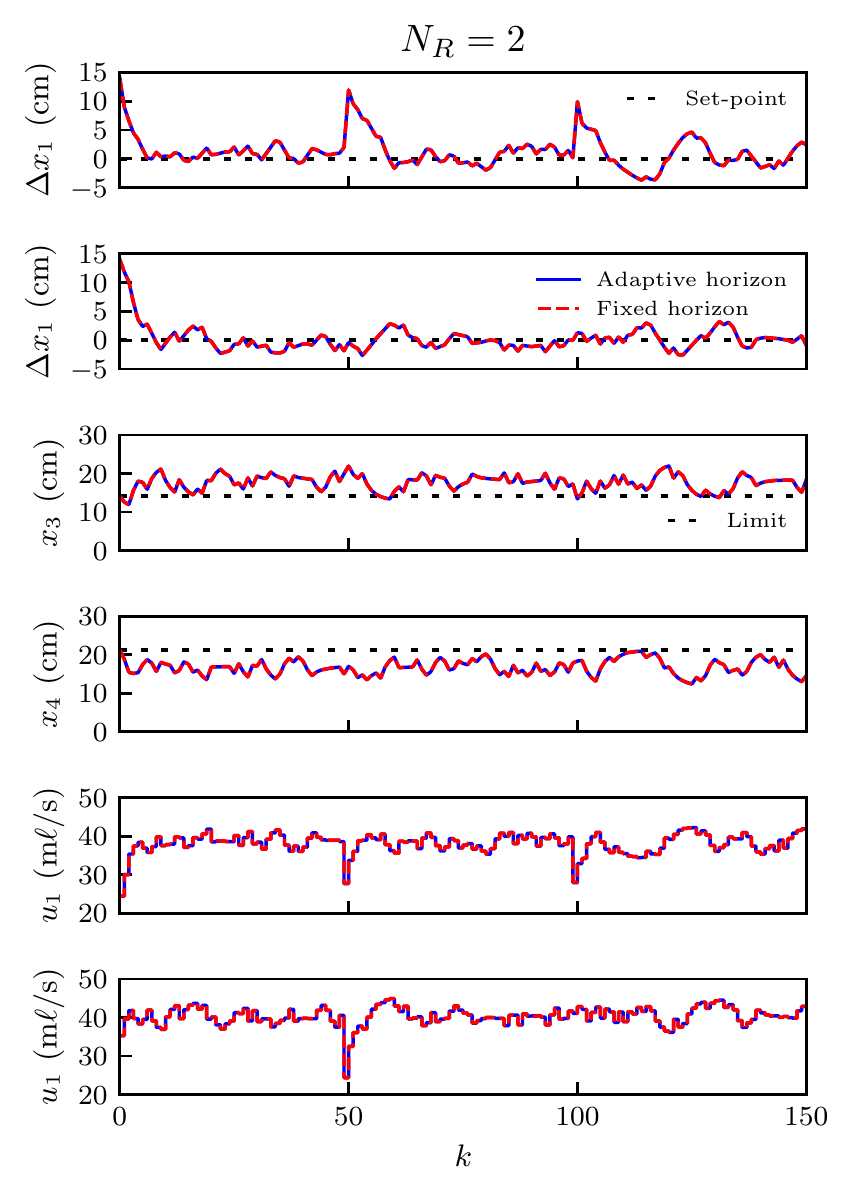}
	\caption{Quad-tank - Simulation results comparing fixed horizon multi-stage and adaptive horizon multi-stage MPC with $ N_R=2 $.}
	\label{fig:quadtankstatesandinputs2}
\end{figure}
It is seen from the levels $x_1$, $x_2$ that the setpoint tracking performance of the two controllers for both cases of $ N_R=1,\,2 $ are again nearly identical.
There are also no significant constraint violations in the bounds of $x_3$, $x_4$ for both controllers.
Therefore, there are no performance losses in both reference tracking and robustness of the multi-stage MPC by including the horizon update algorithm.

The prediction horizon and computation time per iteration for both controllers when $ N_R = 1,\,2 $ are plotted in \cref{fig:quadtankcomputationtime1,fig:quadtankcomputationtime2}, respectively.
\begin{figure}%[htbp]
	\centering
	\includegraphics[scale=1.0]{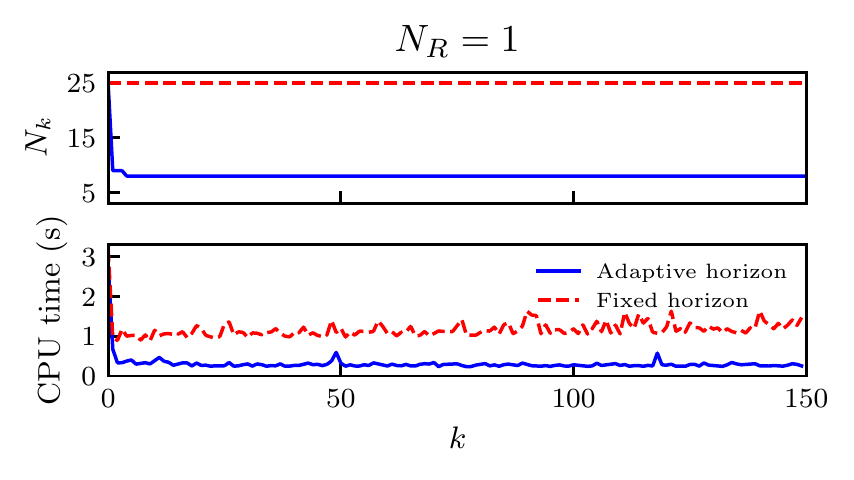}
	\caption{Quad-tank - plot comparing prediction horizons and computation times at each iteration for ideal multi-stage and adaptive horizon multi-stage MPC with $ N_R=1 $.}
	\label{fig:quadtankcomputationtime1}
\end{figure}
\begin{figure}%[htbp]
	\centering
	\includegraphics[scale=1.0]{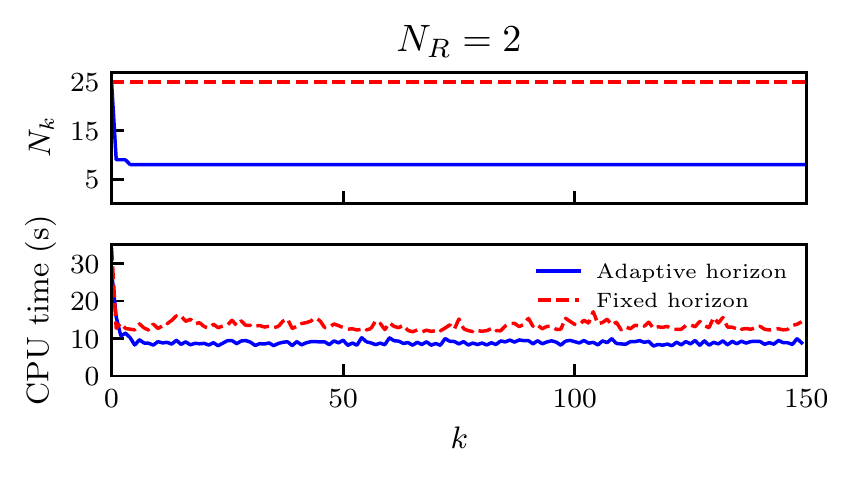}
	\caption{Quad-tank - plot comparing prediction horizons and computation times at each iteration for ideal multi-stage and adaptive horizon multi-stage MPC with $ N_R=2 $.}
	\label{fig:quadtankcomputationtime2}
\end{figure}
There is a reduction in the prediction horizon as the process approaches its setpoint.
The horizon update algorithm is unaffected by the pulse changes in the tank levels.
This is probably because the terminal radii values (see \cref{tab:quadtanklinerrorbounds}) are of the same order as the tank levels.
Therefore the pulse changes do not displace the system far enough from the terminal region to cause a significant horizon increase.
The average CPU time of the fixed horizon multi-stage MPC was significantly reduced by $ 74\% $ with an adaptive horizon when $ N_R = 1 $.
Similarly, the CPU time reduction is $ 33\% $ when $N_R=2$.

\section{Conclusion} \label{sec:conclusion}
\color{\changes}
This paper adopts the existing adaptive horizon algorithm and extends it for multi-stage MPC to improve much-needed computational efficiency.
First, the terminal costs and regions are computed independently for each uncertain parameter realization about their optimal steady states.
We assume that a common terminal region exists for all uncertain parameter realizations, and the prediction horizon length is updated such that it is always reached.

The adaptive horizon framework is shown to be recursively feasible when the scenario tree is fully branched. 
Implementation of soft constraints may be used to avoid problem infeasibility in practice when the scenario tree is not fully branched, but then robust constraint satisfaction is not guaranteed.
The ISpS stability of the fixed horizon multi-stage MPC is retained in this framework by assuming negligible linear control error inside the common terminal region.

Simulation results on three examples demonstrate an effective reduction in the prediction horizon and computational delay, provided the system progressively approaches its setpoint, and that it is not significantly disturbed away from it.
Future avenues for this work include handling economic costs, and the use of parametric sensitivities for both horizon and critical scenario updates to further reduce computation time.
\color{black}
\section*{CRediT Author Statement}
\textbf{Zawadi Mdoe:} Conceptualization, Methodology, Software, Investigation, Writing - original draft, Writing - review \& editing, Visualization.
\textbf{Dinesh Krishnamoorthy:} Conceptualization, Methodology, Writing - review \& editing.
\textbf{Johannes \allowbreak J{\"a}schke:} Conceptualization, Methodology, Writing - original draft, Writing - review \& editing, Supervision.
%\section*{Funding}
%The authors gratefully acknowledge the financial support from the Research Council of Norway and NTNU.
\section*{Declaration of Competing Interest}
The authors declare that they have no known competing financial interests or personal relationships that could have appeared to influence the work reported in this paper.
%\section*{Acknowledgement}
%The authors gratefully acknowledge the financial support from the Research Council of Norway and NTNU.

%% The Appendices part is started with the command \appendix;
%% appendix sections are then done as normal sections
\appendix
\bibliographystyle{elsarticle-num-names} 
\bibliography{mybib}

%% else use the following coding to input the bibitems directly in the
%% TeX file.

% \begin{thebibliography}{00}

% %% \bibitem[Author(year)]{label}
% %% Text of bibliographic item

% \bibitem[ ()]{}

% \end{thebibliography}
\end{document}